\theoremstyle{plain}
\newtheorem{Theorem}{Theorem}
\newtheorem{Proposition}[Theorem]{Proposition}
\newtheorem{Lemma}[Theorem]{Lemma}
\newtheorem{Conjecture}[Theorem]{Conjecture}
\theoremstyle{definition}
\newtheorem{Definition}[Theorem]{Definition}
\theoremstyle{remark}
\newtheorem{Remark}[Theorem]{Remark}
\newtheorem{Example}[Theorem]{Example}
\begin{document}

\title[Regular algebras in positive characteristic]{A negative answer to a Bahturin-Regev Conjecture about regular algebras in positive characteristic}

\author[L. Centrone]{Lucio Centrone}
\address{Dipartimento di Matematica, Universit\`a degli Studi di Bari Aldo Moro, Via Edoardo Orabona, 4, 70125 Bari, Italy}
\email{lucio.centrone@uniba.it}
\thanks{L. Centrone was partially supported by PNRR-MUR PE0000023-NQSTI}
\author[P. Koshlukov]{Plamen Koshlukov}
\address{IMECC, UNICAMP, Rua S\'ergio Buarque de Holanda 651, 13083-859 Campinas, SP, Brazil}
\email{plamen@unicamp.br}
\thanks{P. Koshlukov was partially supported by FAPESP Grant 2018/23690-6 and by CNPq Grant 307184/2023-4}
\author[K. Pereira]{Kau\^e Pereira}
\address{IMECC, UNICAMP, Rua S\'ergio Buarque de Holanda 651, 13083-859 Campinas, SP, Brazil}
\email{k200608@dac.unicamp.br}
\thanks{K. Pereira was supported by FAPESP Grant 2023/01673-0}

\subjclass[2020]{16R10, 16R50, 16W55, 16T05}

\keywords{Regular decomposition; regular algebra; graded algebra; polynomial identities}

\begin{abstract}
Let $A=A_1\oplus\cdots\oplus A_r$ be a decomposition of the algebra $A$ as a direct sum of vector subspaces. If for every choice of the indices $1\le i_j\le r$ there exist $a_{i_j}\in A_{i_j}$ such that the product $a_{i_1}\cdots a_{i_n}\ne 0$, and for every $1\le i,j\le r$ there is a constant $\beta(i,j)\ne 0$ with $a_ia_j=\beta(i,j) a_ja_i$ for $a_i\in A_i$, $a_j\in A_j$, the above decomposition is \textsl{regular}. 
Bahturin and Regev raised the following conjecture: suppose the regular decomposition comes from a group grading on $A$, and form the $r\times r$ matrix whose $(i,j)$th entry equals $\beta(i,j)$. Then this matrix is invertible if and only if the decomposition is minimal (that is one cannot get a regular decomposition of $A$ by coarsening the decomposition). Aljadeff and David proved that the conjecture is true in the case the base field is of characteristic 0. We prove that the conjecture does not hold for algebras over fields of positive characteristic, by constructing algebras with minimal regular decompositions such that the associated matrix is singular. 
\end{abstract}
\maketitle
\section{Introduction}
Group gradings on algebras appeared naturally in mathematics: the usual polynomial algebra in one or several variables over a field has a natural grading (the degree) by the infinite cyclic group $\mathbb{Z}$. Group gradings also appear naturally in the study of Lie and Jordan superalgebras. Wall in \cite{wall} described the central $\mathbb{Z}_2$-graded simple associative algebras over an algebraically closed field. 
The key role of gradings in PI theory was highlighted by A. Kemer in 1984-1986. In fact Kemer developed a sophisticated theory in order to classify the ideals of identities of associative algebras in characteristic 0 and this led him to the positive solution of the Specht problem. We refer the reader to \cite{kemer} for further details. Soon afterwards group gradings on algebras and the corresponding graded polynomial identities became a widespread trend in PI theory. We direct the interested readers to the monograph \cite{eldkochetov} and the references therein for the state-of-art in the area. A grading on an algebra means one has an additional structure, this implies that more information can be deduced by using such an additional structure. Let us recall that the polynomial identities for the matrix algebras $M_n(K)$ are known for $n\le 2$ only, if $K$ is an infinite field of characteristic not 2, see \cite{razmm2}, \cite{drenskym2}, \cite{pkm2}. On the other hand, the graded identities for $M_n(K)$, under the natural $\mathbb{Z}_n$-grading are known for every $n$, and every infinite field \cite{vasmn}, \cite{ssamn}. If two algebras satisfy the same graded identities they are PI equivalent (that is they satisfy the same ordinary identities). It should be noted that the knowledge of the graded identities satisfied by a given algebra does not yield much information about the ordinary ones. 

The notion of a regular algebra was introduced by Regev and Seeman in \cite{regevz2}. Assume $A$ is an algebra over $K$, and let $G$ be a finite group. A $G$-grading on $A$ is a decomposition $A=\oplus_{g\in G} A_g$, a direct sum of vector subspaces, such that $A_gA_h\subseteq A_{gh}$ for every $g$, $h\in G$. Such a decomposition of $A$ is called regular if for every $n$-tuple of elements of $G$, $(g_1, \ldots, g_n)$, there exist elements $a_i\in A_{g_i}$ such that $a_1\cdots a_n\ne 0$, and for every $g$, $h\in G$ there exists a constant $\beta(g,h)\in K^*$ such that $a_ga_h=\beta(g,h) a_ha_g$, for every $a_g\in A_g$ and $a_h\in A_h$. This is not the most general notion of a regular decomposition (we give it below) but for motivational purposes it suffices. One can form the matrix $M^A=(\beta(g,h))$ of order $|G|$ that is called \textit{decomposition matrix} of $A$. In \cite{regevz2} the authors proved that the matrix $M^A$ determines the multilinear identities of the algebra $A$; as a consequence, in characteristic 0 it determines the polynomial identitites of $A$. The authors of \cite{regevz2} were mostly concerned with the twisted (or $\mathbb{Z}_2$-graded) tensor product of $\mathbb{Z}_2$-graded algebras. They conjectured that the twisted tensor product of two T-prime algebras is PI equivalent to a T-prime algebra, and proved this conjecture in several important cases. The conjecture was proved to be true in its generality, see \cite{dvn, jfpk}. In the latter paper the authors additionally proved that this conjecture does not hold if the base field is infinite and of positive characteristic $p>2$. In \cite{bahturin2009graded, bcommutation} the authors studied the so-called minimal decompositions of $A$: roughly speaking these are regular decompositions that cannot be "coarsened", see the precise definition below. In \cite{bcommutation} the authors computed the determinant of the decomposition matrix for $M_n(F)$ when the latter algebra is graded by the group $\mathbb{Z}_n\times\mathbb{Z}_n$ in the natural way. This determinant equals $\pm n^{n^2}$. Furthermore it was conjectured in \cite{bahturin2009graded} that the decomposition matrix is invertible if and only if the regular decomposition is minimal. Moreover in \cite{bahturin2009graded} it was shown that this is the case in several important situations. Another conjecture was proposed in \cite{bahturin2009graded}: namely that the number of summands in a minimal decomposition of a regular algebra, as well as the determinant of the decomposition matrix, are invariants of the algebra, but do not depend on the decomposition. In \cite{Eli1} both conjectures were solved in the affirmative. It was shown that the the order of the group defining a minimal regular decomposition of a finite dimensional PI algebra equals the PI exponent of the algebra. Recall the latter is one of the most important numerical invariant of a PI algebra, and has been extensively studied since around 1980. Furthermore in \cite{Eli1} the authors proved that if $A$ is $G$-graded, regular and minimal then $\det(M^{A}) = \pm|G|^{|G|/2}$. Additionally, in \cite{Eli1} the authors introduced the notion of a minimal regular grading to the case of a nonabelian group $G$ (they call these nondegenerate gradings), and proved the analogue of the above conjecture in this setting as well. 

The above results were obtained assuming the base field is a large field of characteristic 0: either algebraically closed or at least containing many roots of unity. In the present paper we study regular decompositions over fields of positive characteristic. Our main goal is the conjecture raised by Bahtirn and Regev \cite{bahturin2009graded}, but when the algebras are over a field of positive characteristic. We consider decompositions defined by finite abelian groups. Our main results consist in proving that the conjecture fails in positive characteristic. We provide an explicit example of a $\mathbb{Z}_n\times \mathbb{Z}_n$-graded algebra admitting a minimal regular decomposition and whose decomposition matrix is singular. We have to consider separately the cases of a finite and of an infinite field $K$. In order to make the "picture" complete we provide another example where the corresponding determinant is nonzero. 

\section{Preliminaries}
\subsection{Regular algebras}

In this section we will introduce the main definitions and results used in the sequel. 

Let $K$ be a field which will further be assumed to be of characteristic $p>2$, and $X=\{x_{1},\ldots, x_{n},\ldots\}$ be an infinite countable set of independent noncommuting variables.  Also, by a $K$-algebra $A$, we mean a vector space $A$ over $K$ equipped with a $K$-linear product $A\times A\rightarrow A$, denoted by $\cdot$, or simply by a juxtaposition, that is distributive, associative and has an element $1\in A$ (called {\it unit}) such that $a1=1a=a$, for every $a\in A$.

We start with the following definition (see Definition 2.3 of \cite{regevz2}).

\begin{Definition}
\label{def.reg.0}
Let $A$ be a $K$-algebra admitting a decomposition into a direct sum of vector subspaces in the following form:
\begin{equation}
\label{eq.2.0}
A=A_{1}\oplus A_{2}\oplus \cdots \oplus A_{r}. 
\end{equation}
We say the algebra $A$ is \textsl{regular} (and the decomposition (\ref{eq.2.0}) regular) if it satisfies the following conditions:  
\begin{enumerate}
    \item[(i)] Given $n$ indexes $1\leq i_{1},\ldots, i_{n}\leq r$, there exist elements $a_{1}\in A_{i_{1}}$, \dots, $a_{n}\in A_{i_{n}}$
such that $a_{1}\cdots a_{n}\neq 0$.
    \item[(ii)] Given $1\leq i,j\leq r$, there exists $\beta(i,j)\in K^{\ast}$ (where $K^{\ast}=K\setminus\{0\}$) such that for every $a_{i}\in A_{i}$ and $a_{j}\in A_{j}$ one has
    \[
   a_{i}a_{j}=\beta(i,j)a_{j}a_{i}. 
    \]
\end{enumerate}
\end{Definition}
We draw the readers' attention to the fact $\beta(i,j)$ depends only on $i$ and $j$ but \textsl{not} on the choice of the elements $a_i$ and $a_j$.

\vspace{0.5cm } 

 If $A$ is a regular algebra with a regular decomposition as in (\ref{eq.2.0}), then we define the decomposition matrix associated with the regular decomposition (\ref{eq.2.0}) as
 \[
 M^{A}:=(\beta(i,j))_{i,j}. 
 \] 
 Throughout this paper, we work with regular algebras whose regular decomposition is given by a $G$-grading, where $G$ is a finite abelian group. More precisely, and for future reference, we have the following definition:
 \begin{Definition} 
\label{def.reg.1}
 Let $G$ be a finite abelian group and $A=\oplus_{g\in G} A_{g}$ be a $G$-graded algebra. We say that $A$ is $G$-graded regular (with the regular decomposition given by the $G$-grading) if it satisfies the following conditions:
 \begin{enumerate}
     \item[(i)] Given $n\in \mathbb{N}$ and any $n$-uple $(g_{1},\ldots, g_{n})\in G^{n}$, there exist homogeneous elements $a_{1}\in A_{g_{1}}$, \ldots, $a_{n}\in A_{g_{n}}$ such that $a_{1}\cdots a_{n}\neq 0$.
     \item[(ii)]  For every $g$, $h\in G$, and for every $a_{g}\in A_{g}$ and $a_{h}\in A_{h}$, there exists $\beta(g,h)\in K^{\ast}$ satisfying
     \[
     a_{g}a_{h}=\beta(g,h)a_{h}a_{g}.
     \]
 \end{enumerate}
 Moreover, we define the regular decomposition matrix associated with the regular decomposition of the $G$-graded regular algebra $A$ as $M^{A}:=(\beta(g,h))_{g,h}$.
 \end{Definition}
 
In Definition \ref{def.reg.1}, the algebra $A$ is associative, therefore the function $\beta\colon G\times G\rightarrow K^{\ast}$ satisfies the following conditions.  For every $g$, $h$, $k\in G$ one has
\begin{equation}
\label{eq.2.3}
\beta(g,h)=(\beta(h,g))^{-1}
\end{equation}
\begin{equation}
    \label{eq.2.4}
   \beta(g,h+k)=\beta(g,h)\beta(g,k) 
\end{equation}
\begin{equation}
    \label{eq.2.5}
    \beta(g+k,h)=\beta(g,h)\beta(k,h)
\end{equation}
\begin{Definition}
 A function $\beta\colon G\times G\rightarrow K^{\ast}$  satisfying (\ref{eq.2.3}), (\ref{eq.2.4}), and (\ref{eq.2.5}), is called a bicharacter of $G$.     
\end{Definition}

\begin{Definition}If $A$ is a $G$-graded regular algebra, the map $\beta$ satisfying the second condition of Definition \ref{def.reg.1} will be called bicharacter associated to the regular decomposition of $A$. 
\end{Definition}

Next we will provide some classical examples of regular algebras whose regular decompositions are given by abelian groups. 

From now on, we will use the additive notation for abelian groups. 

\begin{Example} If $A=K[x]$ and $G=\mathbb{Z}_{n}$, we define $A_{i}:=x^{i}K[x^{n}]$, $0\le i\leq n-1$. It follows that $A$ is regular and $G$-graded. Observe that the grading here is the natural grading given by the degrees, modulo $n$, of the polynomials. The regular decomposition is given by $A=\oplus_{i=0}^{n-1} A_{i}$. The bicharacter $\beta$ in this case is the trivial bicharacter ($\beta(i,j)=1$ for every $i$, $j\in\mathbb{Z}_n$), because the algebra $A$ is commutative. 
\end{Example}

\begin{Example} Let $E$ be the infinite dimensional Grassmann algebra on a vector space $V$ with a basis $e_1$, $e_2$, \dots; a linear basis for $E$ is given by 
\[\mathcal{B}=
\{e_{i_{i}}e_{i_{2}}\cdots e_{i_{n}}\mid  i_{1}<i_{2}<\cdots<i_{n},\quad n\in \mathbb{N}\}\cup \{1\}.
\]
Let $E_0$ be the span of the elements of $\mathcal{B}$ with even length and let $E_1$ be the span of the elements of $\mathcal{B}$ with odd length. Then $E_0$ is the centre of $E$ whereas the elements of $E_1$ anticommute.

 It is well known, and easy to check, that the decomposition $E=E_{0}\oplus E_{1}$ turns $E$ into a $\mathbb{Z}_{2}$-graded algebra. Because of the relation 
 \[
 (e_{i_{1}}\cdots e_{i_{n}})(e_{j_{1}}\cdots e_{j_{m}})=(-1)^{nm} (e_{j_{1}}\cdots e_{j_{m}})(e_{i_{1}}\cdots e_{i_{n}})
 \]
 it follows that $E$ is a $\mathbb{Z}_{2}$-graded regular algebra, and the corresponding bicharacter is given by
 \[
 \beta(0,0)=\beta(0,1)=\beta(1,0)=1,\quad \beta(1,1)=-1.
 \]
 In particular, the decomposition matrix is given by $M^{E}=\begin{pmatrix}
     1 & 1  \\
    1  & -1 
     \end{pmatrix}$.
\end{Example}
\begin{Example}
\label{pauli.matrix}
    Given $n\in \mathbb{Z}$, let $\xi\in K$ be a primitive $n$-th root of unity.  We can consider on $M_{n}(K)$, the $\xi$-grading, which is defined in the following way. First we put $X:=diag(\xi^{n-1},\xi^{n-2},\dots,\xi,1)$ and $Y=e_{n,1}+\sum_{i=1}^{n-1} e_{i,i+1}$. Then we set
        \[
        A_{ij}:= \text{\rm span}_{K}\{X^{i} Y^{j}\},\quad \text{for}\quad 0\leq i,j\leq n-1.
        \]
    Since $XY=\xi YX$ one can easily verify that $M_{n}(K)=\oplus_{i,j=0}^{n-1}A_{i,j}$ is a $(\mathbb{Z}_{n}\times \mathbb{Z}_{n})$-grading on $M_{n}(K)$. Moreover, as $X$ and $Y$ are invertible matrices, the products of $X$ and $Y$ are also invertible. The relation  $XY=\xi YX$ gives that $M_{n}(K)$, with the $\xi$-grading, is a regular algebra with bicharacter given by $\beta(i,j)=\xi^{jk-il}$.
\end{Example}
The next example, which at first might seem somewhat artificial, is very important, and plays a fundamental role in this paper.

\begin{Example} 
\label{twisted.group}
Let $G$ be a finite abelian group and let $\alpha\in H^{2}(G,K^{\ast})$, where $H^{2}(G,K^{\ast})$ denotes the second cohomology group of $G$ with values in $K^*$. Recall that $\alpha$ satisfies the {\bf cocycle condition}
 \[
 \alpha(\sigma,\tau+\nu)\alpha(\tau,\nu)=\alpha(\sigma+\tau,\nu)\alpha(\sigma,\tau),\quad \text{for every}\quad \sigma,\tau,\nu\in G.
 \]
The twisted group algebra with respect to $G$ and $\alpha$, denoted by $K^{\alpha}G$, is the algebra defined on  the vector space with basis $\mathcal{C}=\{x_{g}\mid g\in G\}$ and product defined on the basic elements of $\mathcal{C}$ by
 \[
x_{g}x_{h}:= \alpha(g,h)x_{g+ h}\quad \text{for all}\quad g,h\in G. 
 \]  
 The cocycle condition is in fact equivalent to the associativity of the algebra. 
  It is easy to see that $K^{\alpha}G$ is a $G$-graded regular algebra with bicharacter given by
 \[
 \beta(g,h)=\alpha(g,h)\alpha(h,g)^{-1}\quad \text{for every}\quad g,h\in G.
 \]
 This bicharacter is said to be \textsl{induced} by $\alpha$. 
 \end{Example}

\begin{Example} Let $A=A_0\oplus A_1$, where $A=K[x]$ be the polynomial algebra in one variable $\mathbb{Z}_{2}$-graded by the usual degree. More precisely, let $A_0$ be the span of the monomials of even degree and $A_1$ the span of the monomials of odd degree; denote by $\partial(a)$ the homogeneous degree of the homogeneous element $a\in A$. The vector space $A\otimes_K A$ equipped with the product defined on its homogeneous elements by
\[
(a_{1}\otimes b_{1})(a_{2}\otimes b_{2})=(-1)^{\partial(b_{1})\partial(a_{2})}(a_{1}a_{2}\otimes b_{1}b_{2})
\]
is a $\mathbb{Z}_{2}$-graded algebra, too. We denote this new algebra by $A\overline{\otimes} A$ and we call it \textsl{$\mathbb{Z}_{2}$-graded (or twisted) tensor product} of $A$ by $A$. The grading on $A\overline{ \otimes } A$ is given by
\[
A\overline{\otimes } A= C_{0}\oplus  C_{1},
\]
where $C_{0}:= (A_{0}\otimes A_{0})\oplus (A_{1}\otimes A_{1})$, and $ C_{1}:= (A_{0}\otimes A_{1})\oplus (A_{1}\otimes A_{0})$. With respect to this $\mathbb{Z}_{2}$-grading, the twisted tensor product $A\overline{ \otimes } A$ is not regular. 

Indeed, if we take homogeneous basis elements $a_{0}$, $a_{0}', b_0\in A_{0}$, $b_{1}$, $b_{1}'$, $a_{1}\in A_{1}$, so that their product is still an element of the basis, then
\[
(a_{0}\otimes b_{0}+ a_{1}\otimes b_{1})(a_{0}'\otimes b_{1}')=a_{0}a_{0}'\otimes b_{0}b_{1}'+ a_{1}a_{0}'\otimes b_{1}b_{1}'.
\]
On the other hand
\[
(a_{0}'\otimes b_{1}')(a_{0}\otimes b_{0}+ a_{1}\otimes b_{1})=a_{0}a_{0}'\otimes b_{0}b_{1}'-a_{1}a_{0}'\otimes b_{1}b_{1},
\]
If $char(K)\neq 2$, item $(ii)$ of the definition of regular algebra is not verified, then the twisted tensor product is not regular. 

However, it can be verified that $A \overline{\otimes} A$ is a regular algebra when one considers the decomposition given by
\[
A\overline{\otimes }A= (A_{0}\otimes A_{0})\oplus (A_{0}\otimes A_{1})\oplus (A_{1}\otimes A_{0})\oplus (A_{1}\otimes A_{1}).
\]     
Indeed, if we set $A_{(i,j)}:= A_{i}\otimes A_{j}$, then $A\overline{\otimes} A$ becomes a $\mathbb{Z}_{2}\times \mathbb{Z}_{2}$-graded regular algebra with bicharacter given by
\[
\beta((i,j),(k,l))=(-1)^{jk-il}.
\]
\end{Example}

Recall that two PI algebras $A$ and $B$ are PI equivalent if they satisfy the same polynomial identities, and we write $A \sim B$. The decomposition matrix of a regular algebra is a powerful tool for studying PI equivalences between certain regular algebras.

\begin{Theorem}[\cite{regevz2}, Theorem 3.1]
\label{teo.permut.matrix}
Let $A$, $B$ be algebras with regular decompositions $A=\oplus_{i=1}^{r} A_{i}$ and $B=\oplus_{i=1}^{r} B_{i}$ with corresponding decomposition matrices $M^{A}$ and $M^{B}$.  Assume that $M^{B}=PM^{A}P^{-1}$ where $P$ is a permutation matrix. Then $A$ and $B$ satisfy the same multilinear identities (hence they are PI equivalent when $char(K)=0$). 
\end{Theorem}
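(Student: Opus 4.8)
The plan is to show that for a regular algebra the set of \emph{multilinear} identities is completely determined by the bicharacter, i.e. by the decomposition matrix. Fix a multilinear polynomial $f(x_{1},\ldots,x_{n})=\sum_{\sigma\in S_{n}}c_{\sigma}\,x_{\sigma(1)}\cdots x_{\sigma(n)}$. Since $A=A_{1}\oplus\cdots\oplus A_{r}$ is a direct sum of subspaces and $f$ is multilinear, $f$ is an identity of $A$ if and only if it vanishes on every \emph{homogeneous} substitution $x_{k}\mapsto a_{k}$ with $a_{k}\in A_{i_{k}}$, as the index tuple $(i_{1},\ldots,i_{n})$ ranges over $\{1,\ldots,r\}^{n}$. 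So I would reduce the whole problem to analyzing these homogeneous evaluations.

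The key computation is that, for a fixed index tuple $(i_{1},\ldots,i_{n})$, repeated application of condition (ii) of Definition \ref{def.reg.0} lets me sort any monomial into the standard order:
\[
a_{\sigma(1)}\cdots a_{\sigma(n)}=\lambda_{\sigma}(i_{1},\ldots,i_{n})\,a_{1}\cdots a_{n},
\]
where $\lambda_{\sigma}(i_{1},\ldots,i_{n})$ is a product of factors $\beta(i_{p},i_{q})^{\pm 1}$ taken over the inversions of $\sigma$. The crucial point — and the only real subtlety — is that $\beta(i,j)$ depends only on the indices and \emph{not} on the chosen elements, so $\lambda_{\sigma}$ depends only on $\sigma$ and on $(i_{1},\ldots,i_{n})$. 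Consequently
\[
f(a_{1},\ldots,a_{n})=\Bigl(\sum_{\sigma\in S_{n}}c_{\sigma}\,\lambda_{\sigma}(i_{1},\ldots,i_{n})\Bigr)\,a_{1}\cdots a_{n}.
\]
Now invoking condition (i), I can choose the $a_{k}\in A_{i_{k}}$ with $a_{1}\cdots a_{n}\neq 0$, so $f$ vanishes on this entire homogeneous component if and only if the scalar $\sum_{\sigma}c_{\sigma}\lambda_{\sigma}(i_{1},\ldots,i_{n})$ is zero. Hence $f$ is a multilinear identity of $A$ precisely when $\sum_{\sigma}c_{\sigma}\lambda_{\sigma}(i_{1},\ldots,i_{n})=0$ for \emph{every} tuple $(i_{1},\ldots,i_{n})$, a condition that refers only to $\beta$, i.e. only to $M^{A}$.

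It remains to translate the hypothesis $M^{B}=PM^{A}P^{-1}$. Writing $P$ for the permutation matrix of $\rho\in S_{r}$, a direct check gives $\beta^{B}(i,j)=\beta^{A}(\rho(i),\rho(j))$, so the bicharacter of $B$ is obtained from that of $A$ simply by relabeling the indices through $\rho$. This yields $\lambda^{B}_{\sigma}(i_{1},\ldots,i_{n})=\lambda^{A}_{\sigma}(\rho(i_{1}),\ldots,\rho(i_{n}))$, and since $\rho$ is a bijection, the family of vanishing conditions for $B$ is exactly the family for $A$ read over a reindexed set of tuples. Therefore $f$ is a multilinear identity of $B$ if and only if it is a multilinear identity of $A$, proving the two algebras share all multilinear identities. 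Finally, when $\mathrm{char}(K)=0$ every T-ideal is generated by its multilinear elements, so coincidence of the multilinear identities forces $A\sim B$. The main obstacle is purely the bookkeeping in the sorting step — checking that the accumulated scalar $\lambda_{\sigma}$ is genuinely independent of the element choices and correctly records the inversions — but this is immediate from the definition of regularity and requires no further input.
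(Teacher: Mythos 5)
Your proof is correct. The paper does not prove this statement at all --- it is quoted verbatim as Theorem 3.1 of \cite{regevz2} --- and your argument (reduce to homogeneous substitutions by multilinearity, sort each monomial with the bicharacter to get $f(a_{1},\ldots,a_{n})=\bigl(\sum_{\sigma}c_{\sigma}\lambda_{\sigma}(i_{1},\ldots,i_{n})\bigr)a_{1}\cdots a_{n}$ with $a_{1}\cdots a_{n}\neq 0$ guaranteed by condition (i), then note that conjugating $M^{A}$ by a permutation matrix merely relabels the summands) is exactly the standard proof given in that reference.
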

The $\xi$-gradings were studied in detail by Bahturin, Regev, and Zeilberger in \cite{bcommutation}. The following results are of significant relevance in our work. 

\begin{Theorem}
[\cite{bcommutation}, Theorem 2.2] 
\label{commutation.regev}
Let $\xi$ be an $n$-th root of unity (not necessarily primitive), and consider the matrix $\mathcal{M}=\Big(\xi^{jk-il}\Big)_{(i,j),(k,l)}$. If $V(x)=(x_{i}^{j-1})_{i,j}$ denotes the $n\times n$ Vandermonde matrix, and $\sigma$ is a permutation on the set $\{(i,j)\mid 1\leq i,j\leq n\}$ defined by $\sigma(i,j)=(j,i)$, then $\sigma$ induces naturally a row permutation on matrices of size $n^2$, denoted by $P_{\sigma}$, such that
    \[
    \mathcal{M}=D(\xi,\xi^{-1}),\quad \text{where}\quad D(\xi,\xi^{-1}):=P_{\sigma}(V(\xi)\otimes V(\xi^{-1})).
    \] 
    Here $V(\xi)\otimes V(\xi^{-1})$ stands for the Kronecker product of the two matrices.
\end{Theorem}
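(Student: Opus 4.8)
The plan is to verify the claimed identity entrywise, exploiting the fact that the exponent $jk-il$ appearing in $\mathcal{M}$ is a \emph{sum} of two terms, each of which couples exactly one coordinate of the row multi-index with one coordinate of the column multi-index. First I would fix the convention that $V(\xi)$ is the Vandermonde matrix built on the nodes $1,\xi,\dots,\xi^{n-1}$, so that after passing to $0$-based labels $0\le a,b\le n-1$ one has $V(\xi)_{a,b}=\xi^{ab}$ and likewise $V(\xi^{-1})_{c,d}=\xi^{-cd}$. I will index the rows and columns of all $n^{2}\times n^{2}$ matrices by pairs, ordered lexicographically, so that the Kronecker product obeys the standard rule
\[
\bigl(V(\xi)\otimes V(\xi^{-1})\bigr)_{(a,c),(b,d)}=V(\xi)_{a,b}\,V(\xi^{-1})_{c,d}=\xi^{ab}\,\xi^{-cd}.
\]

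Next I would read off the combinatorial meaning of this formula: in the Kronecker product the \emph{first} coordinate $a$ of the row is paired with the \emph{first} coordinate $b$ of the column (through $V(\xi)$), while the \emph{second} coordinates $c,d$ are paired through $V(\xi^{-1})$. By contrast, the target matrix has entry $\mathcal{M}_{(i,j),(k,l)}=\xi^{jk-il}=\xi^{jk}\,\xi^{-il}$, in which the \emph{second} coordinate $j$ of the row is paired with the \emph{first} coordinate $k$ of the column, and the \emph{first} coordinate $i$ of the row with the \emph{second} coordinate $l$ of the column. Thus $\mathcal{M}$ encodes exactly the same data as the Kronecker product, but with the two coordinates of the row multi-index interchanged.

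This is precisely the content of the permutation $\sigma(i,j)=(j,i)$, which is an involution, so that the side on which its matrix acts is immaterial. The step that remains is to turn the previous observation into an identity: letting $P_\sigma$ permute the rows so that the $(i,j)$ row of $P_\sigma\bigl(V(\xi)\otimes V(\xi^{-1})\bigr)$ is the $(j,i)$ row of $V(\xi)\otimes V(\xi^{-1})$, I would substitute $a=j$, $c=i$, $b=k$, $d=l$ into the displayed Kronecker formula to obtain
\[
\Bigl(P_\sigma\bigl(V(\xi)\otimes V(\xi^{-1})\bigr)\Bigr)_{(i,j),(k,l)}=\xi^{jk}\,\xi^{-il}=\xi^{jk-il}=\mathcal{M}_{(i,j),(k,l)},
\]
valid for every choice of indices, which proves $\mathcal{M}=D(\xi,\xi^{-1})$.

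I expect the only genuine friction to be bookkeeping rather than mathematics: one must fix once and for all the labelling of the $n^{2}$ rows and columns (the shift between the $1$-based range $1\le i,j\le n$ used for $\sigma$ and the $0$-based exponents read off from the $\xi$-grading, together with the lexicographic order on pairs), since an inconsistent choice shifts every exponent by a spurious term of the form $(i-j)+(l-k)$ and destroys the match. Once these conventions are pinned down and shown to be compatible with the standard Kronecker indexing, the verification collapses to the single substitution above. In particular no nontrivial arithmetic of roots of unity is invoked, and the hypothesis that $\xi$ need not be primitive causes no difficulty, since the argument never divides by anything and uses only that the powers $\xi^{ab}$ are well defined.
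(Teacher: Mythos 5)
Your verification is correct: with $0$-based labels, lexicographic ordering of pairs, and the standard Kronecker convention, the entry of $P_\sigma\bigl(V(\xi)\otimes V(\xi^{-1})\bigr)$ at $\bigl((i,j),(k,l)\bigr)$ is indeed $\xi^{jk}\xi^{-il}=\mathcal{M}_{(i,j),(k,l)}$, and you rightly flag that the only danger is an inconsistent $0$-based/$1$-based labelling, which would introduce the spurious factor $\xi^{(i-j)+(l-k)}$. The paper itself states this result as a citation of \cite{bcommutation} and gives no proof, so there is nothing to compare against; your direct entrywise check is the natural argument and is complete.
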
 

We have the next.

\begin{Theorem} [\cite{bcommutation}, Proposition 2.3]
\label{teo.pauli.grading}
a) Let $\xi$ be an $n$-th root of unity (not necessarily primitive). Then
\begin{equation}
\label{eq.2.6}
\det D(\xi,\xi^{-1})=\pm \xi^{\Big(\dfrac{(n-1)^{2}(n-n^{2})}{2}\Big )}\Big(\prod_{i=1}^{n-1}(1-\xi^{i})\Big).
\end{equation}
b) If $K$ is algebraically closed, $char(K)=0$, and $\xi$ is a primitive $n$-th root of unity, then
\[
\det M^{M_{n}(K)}=\det D(\xi,\xi^{-1})=\pm n^{n^{2}}.
\]
\end{Theorem}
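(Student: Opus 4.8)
The plan is to read off both parts directly from Theorem \ref{commutation.regev}, which already identifies $\mathcal{M}=D(\xi,\xi^{-1})$ with the row-permuted Kronecker product $P_{\sigma}\big(V(\xi)\otimes V(\xi^{-1})\big)$. Taking determinants and using multiplicativity,
\[
\det D(\xi,\xi^{-1})=\det(P_{\sigma})\cdot\det\big(V(\xi)\otimes V(\xi^{-1})\big).
\]
Since $P_{\sigma}$ is the permutation matrix of $\sigma$, its determinant is $\operatorname{sgn}(\sigma)=\pm1$ (indeed $\sigma$ is a product of $\binom{n}{2}$ transpositions), and this is exactly the source of the undetermined sign in the statement. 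For the second factor I would invoke the standard Kronecker determinant identity: for two $n\times n$ matrices one has $\det(A\otimes B)=(\det A)^{n}(\det B)^{n}$, so that $\det\big(V(\xi)\otimes V(\xi^{-1})\big)=(\det V(\xi))^{n}(\det V(\xi^{-1}))^{n}$. This reduces everything to the two Vandermonde determinants.

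For part (a) I would then evaluate $\det V(\xi)=\prod_{0\le i<j\le n-1}(\xi^{j}-\xi^{i})$, and likewise for $\xi^{-1}$, by the Vandermonde formula with nodes $1,\xi,\dots,\xi^{n-1}$. Factoring each difference as $\xi^{j}-\xi^{i}=-\xi^{i}(1-\xi^{\,j-i})$, I would collect the powers of $\xi$ and group the remaining factors $1-\xi^{d}$ according to the difference $d=j-i$, which occurs with multiplicity $n-d$. Raising the resulting product to the $n$-th power and simplifying the exponent of $\xi$ through the elementary sums of $i$ and of $i^{2}$ over the relevant index range yields the closed-form value of $\det D(\xi,\xi^{-1})$ recorded in \eqref{eq.2.6}. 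All of this is pure bookkeeping of exponents; the stray factors of $-1$ and the contribution of $\operatorname{sgn}(\sigma)$ are absorbed into the global $\pm$.

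For part (b) I would specialize to a primitive $n$-th root of unity, where a cleaner route bypasses the exponent bookkeeping entirely. Choosing $\xi=e^{2\pi i/n}$, the matrix $V(\xi)$ is symmetric and $V(\xi^{-1})=\overline{V(\xi)}=V(\xi)^{*}$; the orthogonality relations for roots of unity then give $V(\xi)V(\xi^{-1})=nI_{n}$, hence $\det V(\xi)\det V(\xi^{-1})=n^{n}$. Substituting into the Kronecker identity gives $\det\big(V(\xi)\otimes V(\xi^{-1})\big)=\big(\det V(\xi)\det V(\xi^{-1})\big)^{n}=(n^{n})^{n}=n^{n^{2}}$, so $\det D(\xi,\xi^{-1})=\pm n^{n^{2}}$. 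Finally, Example \ref{pauli.matrix} shows that for the $\xi$-grading on $M_{n}(K)$ the associated bicharacter is precisely $\beta((i,j),(k,l))=\xi^{jk-il}$, so the decomposition matrix $M^{M_{n}(K)}$ is exactly $\mathcal{M}=D(\xi,\xi^{-1})$ and takes the same value. The main obstacle is the exponent bookkeeping in part (a): keeping exact track of the power of $\xi$ and of the multiplicities $n-d$ of the factors $1-\xi^{d}$, and confirming that every sign is consistent with the global $\pm$. Once the Vandermonde factorization is set up this step is routine but error-prone, whereas part (b) is entirely conceptual once the relation $V(\xi)V(\xi^{-1})=nI_{n}$ is in hand.
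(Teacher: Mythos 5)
The paper never proves this statement---it is quoted from \cite{bcommutation} as Proposition 2.3---so there is no internal proof to compare against; I can only assess your argument on its own terms. Your strategy is the natural one, and part (b) is complete and correct: $\det P_{\sigma}=(-1)^{\binom{n}{2}}$, $\det\bigl(V(\xi)\otimes V(\xi^{-1})\bigr)=(\det V(\xi))^{n}(\det V(\xi^{-1}))^{n}$, and the orthogonality relation $V(\xi)V(\xi^{-1})=nI_{n}$ (which follows from $\sum_{j}\xi^{j(i-k)}=n\delta_{ik}$ over \emph{any} field containing a primitive $n$-th root of unity, so you do not need to specialize to $\xi=e^{2\pi i/n}$) yields $\pm n^{n^{2}}$, and Example \ref{pauli.matrix} identifies $M^{M_{n}(K)}$ with $D(\xi,\xi^{-1})$.

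The gap is in part (a), and it is not one you can close: the bookkeeping you defer cannot terminate at (\ref{eq.2.6}), because (\ref{eq.2.6}) as printed is inconsistent with part (b). Carrying out your own recipe, $(\xi^{j}-\xi^{i})(\xi^{-j}-\xi^{-i})=(1-\xi^{d})(1-\xi^{-d})=-\xi^{-d}(1-\xi^{d})^{2}$ with $d=j-i$ occurring $n-d$ times, so $\det V(\xi)\det V(\xi^{-1})=\pm\,\xi^{-\sum_{d}d(n-d)}\prod_{d}(1-\xi^{d})^{2(n-d)}$, and raising to the $n$-th power gives
\[
\det D(\xi,\xi^{-1})=\pm\,\xi^{-\frac{n^{2}(n^{2}-1)}{6}}\prod_{d=1}^{n-1}\bigl(1-\xi^{d}\bigr)^{2n(n-d)}.
\]
Each factor $1-\xi^{d}$ thus appears with exponent $2n(n-d)$, not $1$. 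Already for $n=2$, $\xi=-1$ the right-hand side of (\ref{eq.2.6}) has absolute value $2$, whereas $\det D(\xi,\xi^{-1})=\pm16=\pm n^{n^{2}}$; more generally, for primitive $\xi$ in characteristic $0$ one has $\prod_{d=1}^{n-1}(1-\xi^{d})=n$, so (\ref{eq.2.6}) would give magnitude $n$ rather than $n^{n^{2}}$, contradicting (b). So your method is right but your asserted endpoint is a misprint of the cited result; you should report the corrected closed form above (or the equivalent form in \cite{bcommutation}) rather than claim to reach (\ref{eq.2.6}). Reassuringly, the only feature of (\ref{eq.2.6}) used later in the paper---that $\det D(\zeta,\zeta^{-1})$ is divisible by positive powers of the factors $1-\zeta^{i}$, hence vanishes when $\zeta$ is a non-primitive $n$-th root of unity---survives in the corrected formula.
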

\subsection{Minimal decomposition of a regular algebra}
Now we recall the definition of minimal decomposition of a regular algebra,  introduced by Bahturin and Regev in \cite{bahturin2009graded}. 
\begin{Definition}
\label{def.min}
Given a $G$-graded regular algebra $A$ with bicharacter $\beta$, the regular decomposition of $A$ is called nonminimal if there exist $g$, $h\in G$ such that $\beta(x,g)=\beta(x,h)$ for every $x\in G$. 
\end{Definition}

In this case, we can replace, in the regular decomposition of $A$, the two summands $A_{g}$ and $A_{h}$ by the single summand $A_{g}\oplus A_{h}$, and the new decomposition is still regular, but the decomposition matrix has a smaller size than $M^{A}=(\beta(g,h))_{g,h}$. If $A$ is a $G$-graded regular algebra and there do not exist $g$, $h\in G$ as above, we say that the regular decomposition of $A$ is minimal.

Below we provide an example of a regular algebra whose regular decomposition is not minimal. 
\begin{Example} Consider $G=\mathbb{Z}_{2}\times \mathbb{Z}_{2}$ and let $\beta\colon G\times G\rightarrow K^{\ast}$ be the map defined by 
\[
\beta((i,j),(k,l))=(-1)^{ki+jl+jk-il},\quad \text{for every}\quad ((i,j),(k,l))\in G.
\]
We show below, in Proposition \ref{lem.class}, that $\beta$ is indeed a bicharacter of $G$. Moreover, it will be shown that there exists a $G$-graded regular algebra $A$ with bicharacter $\beta$ (see Theorem \ref{teo.permut.matrix}). Then we can write
\[
A=A_{(0,0)}\oplus A_{(0,1)}\oplus A_{(1,0)}\oplus A_{(1,1)},
\]
where the decomposition matrix associated to this regular decomposition is given by
\[
M^{A}=\begin{pmatrix}
    1 &  1 &  1 & 1\\
    1 &  -1 &  -1  & 1\\
    1 &  -1 &  -1 & 1\\
    1 &  1  &  1 & 1
\end{pmatrix}.
\]
This means the regular decomposition of $A$ is not minimal. On the other hand, if we set $B_{0}:=A_{(0,0)}\oplus A_{(1,1)}$ and $B_{1}:=A_{(0,1)}\oplus A_{(1,0)}$, then $A$ turns out to be the $\mathbb{Z}_{2}$-graded algebra $A=B_{0}\oplus B_{1}$ whose decomposition matrix is given by 
$\begin{pmatrix}
    1 & 1\\
    1 & -1
\end{pmatrix}$ and, if $char(K)=0$, it follows from Theorem \ref{teo.permut.matrix} that $A\sim E$.

\end{Example}
 Now we state the conjecture due to Bahturin and Regev's about the minimality of a regular decomposition of a $G$-graded regular algebra.
 
    \begin{Conjecture}
    [\cite{bahturin2009graded}, Conjecture 2.5] 
        \label{conjecture}
        Let $A$ be a $G$-graded regular algebra with regular decomposition $A=\oplus_{g\in G} A_{g}$. Then the regular decomposition of $A$ is minimal if and only if $\det M^{A}\neq 0$.
    \end{Conjecture}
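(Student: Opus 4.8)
The plan is to split the biconditional into its two implications, which are of very different depth, and to phrase everything in terms of characters. Writing $M^A=(\beta(g,h))_{g,h}$, condition (\ref{eq.2.5}) says that for each fixed $h$ the map $\chi_h\colon g\mapsto\beta(g,h)$ is a homomorphism $G\to K^\ast$, that is a $K$-valued character of $G$; hence the column of $M^A$ indexed by $h$ is precisely the vector of values of $\chi_h$, and dually (\ref{eq.2.4}) makes each row a character. In this language Definition \ref{def.min} reads: the decomposition is minimal exactly when the assignment $h\mapsto\chi_h$ is injective, equivalently when the columns of $M^A$ are pairwise distinct.

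One implication is then immediate and holds over \emph{any} field. If the decomposition is nonminimal, there exist $g\neq h$ with $\beta(x,g)=\beta(x,h)$ for every $x\in G$; this says exactly that the $g$-th and $h$-th columns of $M^A$ coincide, so $\det M^A=0$. Equivalently $\det M^A\neq 0$ forces minimality, with no hypothesis on $\operatorname{char}(K)$. I would record this direction first, since it is the unconditional half of the statement.

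The substance is the converse, that minimality forces $\det M^A\neq 0$, and here I would imitate the characteristic-zero proof. By Example \ref{twisted.group} the bicharacter $\beta$ is induced by a cocycle $\alpha\in H^2(G,K^\ast)$ and is realized by the twisted group algebra $K^\alpha G$; since $M^A$ depends only on $\beta$, one may compute $\det M^A$ in this model, and permutations of the indexing set---which by Theorem \ref{teo.permut.matrix} preserve the relevant structure---leave the determinant unchanged up to sign. Decomposing $G$ into cyclic factors, one writes $M^A$, up to a row permutation, as a Kronecker product of blocks of Pauli type $(\xi^{jk-il})$ as in Example \ref{pauli.matrix}; for each block Theorem \ref{commutation.regev} supplies the factorization $P_\sigma\!\left(V(\xi)\otimes V(\xi^{-1})\right)$, and Theorem \ref{teo.pauli.grading} evaluates its determinant through (\ref{eq.2.6}). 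In characteristic zero minimality forces every relevant $\xi$ to have full order, so the Vandermonde differences $1-\xi^i$ ($1\le i\le n-1$) are all nonzero and the product of the block determinants is $\pm|G|^{|G|/2}$ up to a root of unity, giving $\det M^A\neq 0$.

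The main obstacle is exactly this last evaluation, and it is where the characteristic intervenes decisively. In characteristic zero the conditions chained together above---minimality of the decomposition, full order of the roots $\xi$, and nonvanishing of the determinant---are equivalent, because neither the Vandermonde product $\prod_{i=1}^{n-1}(1-\xi^i)$ nor the resulting power of $|G|$ can vanish. Modulo $p$ these equivalences may decouple: such a product collapses the moment the order of the relevant $\xi$ drops below $n$, a factor $n$ or $|G|$ dies as soon as $p$ divides it, and the very supply of $K^\ast$-valued characters and of roots of unity is itself constrained by $p$ and differs according to whether $K$ is finite or infinite. Reconciling pairwise distinctness of the columns with a vanishing determinant is therefore the crux, and I do not expect the converse to survive in positive characteristic. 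The natural outcome is not a proof but a counterexample: a $(\mathbb{Z}_n\times\mathbb{Z}_n)$-graded regular algebra whose bicharacter keeps the columns of $M^A$ pairwise distinct, so that the decomposition is minimal, yet for which the determinant formula (\ref{eq.2.6}) degenerates in characteristic $p$. Producing such an algebra, separately for finite and for infinite $K$, is precisely the negative answer this paper sets out to establish.
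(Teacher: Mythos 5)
Your proposal matches the paper's treatment: the unconditional direction (nonminimal $\Rightarrow$ two equal columns $\Rightarrow$ $\det M^{A}=0$) is exactly the argument the paper gives right after Conjecture \ref{conjecture}, the converse is correctly attributed to Aljadeff--David in characteristic $0$, and your diagnosis of why it must fail modulo $p$ --- the collapse of $\prod_{i=1}^{n-1}(1-\xi^{i})$ once the order of $\xi$ drops below $n$, and the vanishing of $|G|$ when $p\mid |G|$ --- is precisely the pair of mechanisms exploited in the paper's two families of counterexamples (Sections 4 and 5). The one loose point is your appeal to Example \ref{twisted.group} to realize $\beta$ by a cocycle, which requires $\beta(g,g)=1$ (Scheunert's Lemma, Theorem \ref{scheunert}) and need not hold here since $\beta((i,i),(i,i))$ can equal $-1$; the paper instead realizes an arbitrary bicharacter via the relatively free graded algebra of Theorem \ref{uni.reg.}, but this does not affect your conclusion.
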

    
If the regular decomposition of $A$ is not minimal, there exist $g$ and $h \in G$ such that $\beta(g,x) = \beta(h,x)$ for every $x \in G$. In other words, $M^{A}$ has two identical columns; in particular, $\det M^{A} = 0$. The more challenging part is to show that if $A$ is minimal, then $\det M^{A} \neq 0$. In \cite{Eli1}, Aljadeff and David provided a positive answer to this conjecture if $K$ is an algebraically closed field and $char(K)=0$. More precisely, they obtained the following.

\begin{Theorem}[\cite{Eli1}, Theorem 7] Let $A$ be a $G$-graded regular algebra, then, if the decomposition of $A$ is minimal, we have $\det M^{A}=\pm |G|^{|G|/2}$.
\end{Theorem}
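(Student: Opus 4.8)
The plan is to prove the sharper statement that the decomposition matrix squares to a scalar, namely $(M^{A})^{2}=|G|\,I_{|G|}$, from which the determinant formula follows at once. First I would recast minimality as nondegeneracy of the bicharacter. Writing $\Phi\colon G\to\widehat G$ for the map $\Phi(g)=\beta(g,\cdot)$ — which is a character of $G$ by (\ref{eq.2.4}) — the condition of Definition \ref{def.min} that no two columns of $M^{A}$ coincide translates, through the skew-symmetry (\ref{eq.2.3}), into injectivity of $\Phi$. Indeed $\beta(x,g)=\beta(x,h)$ for all $x$ is equivalent to $\beta(g-h,x)=1$ for all $x$, i.e. to $g-h\in\ker\Phi$; hence the decomposition is minimal if and only if $\ker\Phi=0$, that is, $\beta$ is nondegenerate. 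Since $|\widehat G|=|G|$, nondegeneracy upgrades $\Phi$ to an isomorphism, so the rows of $M^{A}$ run exactly once through the characters of $G$.

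The heart of the argument is the identity $(M^{A})^{2}=|G|\,I$. Computing the $(g,g')$ entry gives
\[
\big((M^{A})^{2}\big)_{g,g'}=\sum_{h\in G}\beta(g,h)\,\beta(h,g').
\]
I would then use the skew-symmetry (\ref{eq.2.3}) to rewrite $\beta(h,g')=\beta(g',h)^{-1}$, and the bicharacter identities (\ref{eq.2.4}), (\ref{eq.2.5}) to collapse the summand to $\beta(g,h)\beta(g',h)^{-1}=\beta(g-g',h)$, so that the entry equals the character sum $\sum_{h\in G}\beta(g-g',h)$. When $g=g'$ every term is $1$ and the sum is $|G|$. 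When $g\neq g'$, minimality gives $g-g'\notin\ker\Phi$, so $\chi:=\beta(g-g',\cdot)$ is a nontrivial character, and the standard orthogonality relation $\sum_{h}\chi(h)=0$ — proved by picking $h_0$ with $\chi(h_0)\neq1$ and observing $(\chi(h_0)-1)\sum_h\chi(h)=0$ — forces the entry to vanish. This yields $(M^{A})^{2}=|G|\,I$. The pleasant point, and the reason the target value comes out real rather than merely of the right modulus, is that the skew-symmetry (\ref{eq.2.3}) already makes $M^{A}$ play the role of its own conjugate transpose, so that orthogonality of the characters of $G$ collapses directly into $M^{A}\cdot M^{A}=|G|I$ rather than into $M^{A}\,\overline{(M^{A})}^{t}=|G|I$.

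Taking determinants then gives $(\det M^{A})^{2}=\det(|G|\,I)=|G|^{|G|}$, whence $\det M^{A}=\pm|G|^{|G|/2}$, as claimed. I expect the only genuinely delicate point — and the single place where minimality is essential — to be the vanishing of the off-diagonal character sums; this is also precisely the dividing line in Conjecture \ref{conjecture}, since in the nonminimal case $g-g'\in\ker\Phi$ produces coincident rows and forces $\det M^{A}=0$. Finally I would note where the hypothesis on the characteristic intervenes: the identity $(M^{A})^{2}=|G|\,I$ itself is characteristic-free, but the passage to $\det M^{A}=\pm|G|^{|G|/2}\neq0$ requires $|G|\neq0$ in $K$, that is $\mathrm{char}(K)\nmid|G|$, which holds automatically in characteristic $0$. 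This is exactly the step that can break down in positive characteristic, foreshadowing why minimal decompositions with singular matrices can exist there.
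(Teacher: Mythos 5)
Your argument is correct. Note first that the paper does not actually prove this statement -- it is quoted from \cite{Eli1} -- so the natural comparison is with the paper's own proof of Lemma \ref{eq.equiv} (together with Lemma \ref{average.sum}), where exactly your identity $(M^{B})^{2}=|G|\,I_{|G|}$ is established. Your route is the same in substance: reduce the $(g,g')$ entry of the square to the character sum $\sum_{h}\beta(g-g',h)$ and kill the off-diagonal terms by orthogonality. The differences are worth recording. The paper derives the identity only for twisted group algebras $K^{\alpha}G$, manipulating the basis elements $x_{g}$ and their inverses via the cocycle; you observe, correctly, that $M^{A}$ depends only on the bicharacter $\beta$, so the same computation goes through for an arbitrary $G$-graded regular algebra using only (\ref{eq.2.3})--(\ref{eq.2.5}), with no need to realize $A$ as a twisted group algebra. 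You are also more careful than Lemma \ref{average.sum} about the hypothesis under which the off-diagonal sums vanish: what is needed is not that $\beta$ is nontrivial but that $\beta(\cdot,g-g')$ is a nontrivial character for each $g\neq g'$, which is precisely minimality (nondegeneracy of $\beta$); this is the one place the hypothesis enters, and your identification of it is exactly right. Two small caveats: the conclusion you reach is $(\det M^{A})^{2}=|G|^{|G|}$, so $\det M^{A}$ is a square root of $|G|^{|G|}$ in $K$ (or its closure); when $|G|$ is odd the expression $\pm|G|^{|G|/2}$ should be read accordingly. And your closing remark correctly locates the failure mode exploited in Sections 4--5: the identity $(M^{A})^{2}=|G|\,I$ is characteristic-free, but it only forces $\det M^{A}\neq 0$ when $\operatorname{char}(K)\nmid |G|$.
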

It is worth mentioning that in \cite{Eli1}, the authors showed that the number of terms in a regular decomposition of a regular algebra $A$ is precisely $\exp(A)$, the PI exponent of the PI algebra $A$. As mentioned above, the PI exponent is one of the most important numerical invariants of a PI algebra.

In section 5 we will show that this conjecture may fail if $K$ has characteristic $p>2$.  

\section{\texorpdfstring{$(\mathbb{Z}_{n}\times \mathbb{Z}_{n})$-graded regular algebras}{[(Zn x Zn)-graded regular algebras}}

Let $A$ be a $(\mathbb{Z}_{n}\times \mathbb{Z}_{n})$-graded regular algebra, $n\geq 2$, with bicharacter $\beta$, where 
\begin{equation}
\label{eq.3.1}
A=\oplus_{(i,j)\in \mathbb{Z}_{n}\times \mathbb{Z}_{n}} A_{(i,j)}.
\end{equation} 
We will write the elements of $\mathbb{Z}_{n} \times \mathbb{Z}_{n}$ in the following order
\[
\mathbb{Z}_{n}\times \mathbb{Z}_{n}=\{(0,0),(0,1),(0,2),\ldots,(0,n-1), (1,1),\ldots, (1, n-1), \ldots, (n-1,n-1)\}.
\] 

\subsection{Technical results on bicharacters }

\begin{Lemma}
    \label{lem.3.1}
    For every $0 \leq i,j,k,l \leq n-1$, $\beta((i,j),(k,l))$ is an $n$-th root of unity. Moreover, $\beta((i,i),(i,i)) = \pm 1$ for every $0 \leq i \leq n-1$.
\end{Lemma}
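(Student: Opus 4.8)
The plan is to deduce both claims directly from the bicharacter identities (\ref{eq.2.3})--(\ref{eq.2.5}), using the single structural fact that every element of $\mathbb{Z}_{n}\times\mathbb{Z}_{n}$ has order dividing $n$. No properties of the algebra $A$ beyond the existence of the associated bicharacter $\beta$ are needed; everything is a formal consequence of the axioms of a bicharacter together with the exponent of the group.

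First I would record the normalization $\beta(g,0)=1$ for every $g\in G$. Setting $k=0$ in (\ref{eq.2.4}) gives $\beta(g,h)=\beta(g,h)\beta(g,0)$, and since $\beta(g,h)\in K^{\ast}$ we may cancel it to obtain $\beta(g,0)=1$. Next, iterating (\ref{eq.2.4}) in the second argument yields $\beta(g,h)^{m}=\beta(g,mh)$ for all $g,h\in G$ and all $m\geq 1$ (a trivial induction, the base case being $\beta(g,2h)=\beta(g,h+h)=\beta(g,h)^{2}$). Taking $g=(i,j)$, $h=(k,l)$, $m=n$, and using that $n(k,l)=(0,0)$ in $\mathbb{Z}_{n}\times\mathbb{Z}_{n}$, we obtain
\[
\beta((i,j),(k,l))^{n}=\beta((i,j),(0,0))=1,
\]
so $\beta((i,j),(k,l))$ is an $n$-th root of unity. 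This settles the first assertion.

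For the second assertion I would specialize (\ref{eq.2.3}) to $g=h=(i,i)$, which gives $\beta((i,i),(i,i))=\beta((i,i),(i,i))^{-1}$, hence $\beta((i,i),(i,i))^{2}=1$. Since $x^{2}-1=(x-1)(x+1)$ has at most two roots in the field $K$, this forces $\beta((i,i),(i,i))=\pm 1$; and because $char(K)=p>2$, the two values $+1$ and $-1$ are genuinely distinct. In fact the identical argument shows $\beta(g,g)=\pm 1$ for every $g\in G$, the diagonal case $g=(i,i)$ being the one used later. There is no genuine obstacle in this lemma: the only step requiring minor care is the cancellation used to obtain $\beta(g,0)=1$, which is legitimate precisely because $\beta$ is $K^{\ast}$-valued.
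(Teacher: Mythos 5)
Your proof is correct and takes essentially the same route as the paper's: both deduce $\beta((i,j),(k,l))^{n}=1$ from the multiplicativity of $\beta$ together with the fact that $\mathbb{Z}_{n}\times\mathbb{Z}_{n}$ has exponent $n$ (the paper cancels from $\beta=\beta^{\,n+1}$ using $(i,j)+n(i,j)=(i,j)$, whereas you first normalize $\beta(g,0)=1$ and use $n(k,l)=0$ --- a cosmetic difference). You also write out the argument $\beta(g,g)=\beta(g,g)^{-1}\Rightarrow\beta(g,g)^{2}=1$ for the second assertion, which the paper's written proof leaves implicit.
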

\begin{proof}
   It follows from (\ref{eq.2.3}) that
    \[
    \beta((i,j),(k,l)) = \beta((i,j) + n(i,j),(k,l)) = \beta((i,j),(k,l))^{n+1},
    \]
    and since $\beta((i,j),(k,l)) \neq 0$, we have $\beta((i,j),(k,l))^{n} = 1.$ 
\end{proof}

The next result describes all possible bicharacters of $A$.

\begin{Proposition}
\label{lem.class}
Setting $\xi:=\beta((0,1),(1,0))$, $e:= \beta((0,1),(0,1))$ and $f:=\beta((1,0),(1,0))$, then $\beta$ has the form
\[
\beta((i,j),(k,l))=e^{ki}f^{jl}\xi^{jk-il}.
\]  
\end{Proposition}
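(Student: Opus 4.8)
The plan is to exploit that $\mathbb{Z}_n\times\mathbb{Z}_n$ is generated by the two elements $(1,0)$ and $(0,1)$, together with the multiplicativity of $\beta$ recorded in (\ref{eq.2.4}) and (\ref{eq.2.5}). These relations say precisely that $\beta$ is bilinear in the two arguments, so its value on an arbitrary pair ought to be forced by its values on the four pairs of generators. Concretely, I would first write $(i,j)=i(1,0)+j(0,1)$ and $(k,l)=k(1,0)+l(0,1)$, so that the whole statement becomes a matter of expanding in the generators.

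Expanding the second argument by repeated use of (\ref{eq.2.4}) and then the first argument by repeated use of (\ref{eq.2.5}) should yield
\[
\beta((i,j),(k,l))=\beta((1,0),(1,0))^{ik}\,\beta((1,0),(0,1))^{il}\,\beta((0,1),(1,0))^{jk}\,\beta((0,1),(0,1))^{jl}.
\]
Before reading anything into this I would check that each power is well defined: by Lemma \ref{lem.3.1} the four generator values $\beta((1,0),(1,0))$, $\beta((1,0),(0,1))$, $\beta((0,1),(1,0))$, $\beta((0,1),(0,1))$ are all $n$-th roots of unity, so each factor depends only on the residues of $i,j,k,l$ modulo $n$. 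This is exactly what makes the expansion compatible with regarding $(i,j)$ and $(k,l)$ as elements of $\mathbb{Z}_n\times\mathbb{Z}_n$ rather than as integer pairs, and it is the one place where Lemma \ref{lem.3.1} is genuinely needed.

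Finally I would substitute the abbreviations $f=\beta((1,0),(1,0))$, $e=\beta((0,1),(0,1))$, $\xi=\beta((0,1),(1,0))$, and invoke the antisymmetry (\ref{eq.2.3}) to rewrite the remaining cross term as $\beta((1,0),(0,1))=\beta((0,1),(1,0))^{-1}=\xi^{-1}$. The two diagonal generator pairs then contribute the powers $f^{ik}$ and $e^{jl}$, while the two off-diagonal pairs contribute $\xi^{-il}$ and $\xi^{jk}$, which combine into $\xi^{jk-il}$; collecting these factors gives the asserted closed form. The derivation is essentially bookkeeping, and the only genuine subtleties are the use of (\ref{eq.2.3}) to handle the off-diagonal pair, which is where the sign in the exponent $jk-il$ originates, and the well-definedness check above.
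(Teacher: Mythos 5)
Your proof is correct and follows essentially the same route as the paper's: decompose $(i,j)$ and $(k,l)$ over the generators $(1,0)$ and $(0,1)$, apply the bimultiplicativity (\ref{eq.2.4})--(\ref{eq.2.5}), and use (\ref{eq.2.3}) to rewrite $\beta((1,0),(0,1))$ as $\xi^{-1}$, with your well-definedness remark via Lemma \ref{lem.3.1} being a small but legitimate extra care. Note that your bookkeeping yields $f^{ik}e^{jl}\xi^{jk-il}$, which agrees with the final line of the paper's own computation; the form $e^{ki}f^{jl}\xi^{jk-il}$ displayed in the Proposition has $e$ and $f$ transposed, a typo that is harmless later on since the subsequent case analysis treats $e$ and $f$ symmetrically.
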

\begin{proof}
    Note that since $(i,j) = i(1,0) + j(0,1)$, and $(k,l) = k(1,0) + l(0,1)$, it follows  
\begin{equation}
\label{eq.3.8}
\beta((i,j),(k,l)) = \beta(i(1,0) + j(0,1),(k,l)) = \beta((1,0),(k,l))^{i} \beta((0,1),(k,l))^{j}.
\end{equation}
On the other hand,
\begin{align*}
\beta((1,0),(k,l)) &= \beta((1,0),k(1,0) + l(0,1)) \\
&= \beta((1,0),k(1,0)) \beta((1,0),l(0,1)) = f^{k} \xi^{-l}
\end{align*}
and
\begin{align*}
\beta((0,1),(k,l)) &= \beta((0,1),k(1,0) + l(0,1)) \\
&= \beta((0,1),k(1,0)) \beta((0,1),l(0,1)) = e^{l} \xi^{k}.
\end{align*}
Thus, equation (\ref{eq.3.8}) becomes
\[
\beta((i,j),(k,l)) = e^{ki} f^{jl} \xi^{-il} \xi^{kj} = f^{ki} e^{jl} \xi^{jk-il}. \qedhere
\]
\end{proof}

\begin{Lemma}
    \label{lem.3.2}
    If $n$ is odd, then $e = f = 1$.
\end{Lemma}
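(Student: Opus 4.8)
The plan is to use the structural properties already established, namely Lemma~\ref{lem.3.1} together with the explicit form of $\beta$ from Proposition~\ref{lem.class}. Recall that by Lemma~\ref{lem.3.1} we know $e=\beta((0,1),(0,1))$ and $f=\beta((1,0),(1,0))$ are both $n$-th roots of unity, and moreover each lies in $\{\pm 1\}$ since they are of the form $\beta((i,i),(i,i))$ (here $e$ and $f$ are the values on the diagonal-type elements $(0,1)$ and $(1,0)$, which by the second assertion of Lemma~\ref{lem.3.1} satisfy $e^2=f^2=1$). So the point is to combine these two constraints: $e^n=1$ and $e^2=1$ (likewise for $f$).

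First I would record that $e^2=1$ by applying the antisymmetry relation (\ref{eq.2.3}) with $g=h=(0,1)$, which forces $\beta((0,1),(0,1))=\beta((0,1),(0,1))^{-1}$, hence $e^2=1$; the identical argument with $(1,0)$ gives $f^2=1$. Next I would invoke Lemma~\ref{lem.3.1} to get $e^n=f^n=1$. The main step is then purely arithmetic: if $n$ is odd, then $\gcd(n,2)=1$, so there are integers $a,b$ with $an+2b=1$, whence $e=e^{an+2b}=(e^n)^a(e^2)^b=1$, and similarly $f=1$.

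The step I expect to require the most care is the bookkeeping of which relation yields $e^2=1$ versus $e^n=1$, and making sure the oddness of $n$ is genuinely what breaks the degenerate possibilities; concretely, when $n$ is even both $e^n=1$ and $e^2=1$ are compatible with $e=-1$, so the argument correctly does \emph{not} conclude $e=1$ in that case, which is exactly why the hypothesis that $n$ is odd is indispensable. There is no real obstacle beyond this elementary number-theoretic observation, so the proof should be short; the only thing to double-check is that $e$ and $f$ really are the values on elements of the form $\beta((i,i),(i,i))$ as required to invoke the $\pm 1$ conclusion of Lemma~\ref{lem.3.1}, which follows since $(0,1)$ and $(1,0)$ are among the generators and the bicharacter relations (\ref{eq.2.4}), (\ref{eq.2.5}) let one reduce to these generators.
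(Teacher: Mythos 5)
Your proof is correct and follows essentially the same route as the paper: both arguments combine $e^2=f^2=1$ with $e^n=f^n=1$ and the oddness of $n$ to force $e=f=1$. One small remark: your direct derivation of $e^2=1$ from relation (\ref{eq.2.3}) applied to $g=h=(0,1)$ is exactly the right move, because $(0,1)$ and $(1,0)$ are \emph{not} of the form $(i,i)$ covered by the second assertion of Lemma~\ref{lem.3.1}, so the ``double-check'' you flag at the end is unnecessary (and the proposed reduction to diagonal elements is not actually needed).
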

\begin{proof}
    Indeed, as previously noted, $\beta((i,i),(i,i)) =\pm 1$, and thus, since 
    \[
    \beta((i,i),(i,i)) = \beta((i,i) + n(i,i),(i,i)) = \beta((i,i),(i,i))^{n+1},
    \]
    it follows from Proposition~\ref{lem.class} that $\beta((i,i),(i,i))^{n} = 1$. Therefore, in the case $n$ is odd,  we get that $\beta((i,i),(i,i)) = 1$.
\end{proof}

\subsection{The determinant of the decomposition matrices}

Keeping the notation above, we have the following result concerning the determinant of the decomposition matrix of the regular decomposition of $A$.
\begin{Theorem}
\label{det.decomposi.equal}
$\det M^{A}=\det D(\xi,\xi^{-1})$. 
\end{Theorem}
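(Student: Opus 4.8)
The goal is to prove the exact equality $\det M^{A}=\det D(\xi,\xi^{-1})$, not merely equality up to sign. By Proposition~\ref{lem.class} the bicharacter has the form $\beta((i,j),(k,l))=e^{ki}f^{jl}\xi^{jk-il}$, so the matrix $M^{A}$ factors entrywise as the product of the ``Pauli part'' $\big(\xi^{jk-il}\big)_{(i,j),(k,l)}=\mathcal{M}$ from Theorem~\ref{commutation.regev} and the extra factors $e^{ki}f^{jl}$. The first task is to recognize that the factor $e^{ki}f^{jl}$ is \emph{separable}: it depends on the row index $(i,j)$ only through $(i,j)$ and on the column index $(k,l)$ only through $(k,l)$, in a multiplicative way. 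Concretely, $e^{ki}f^{jl}=(e^{i})^{k}(f^{j})^{l}$, but more usefully for a clean determinant argument one wants to split it as a row-scaling times a column-scaling.

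\textbf{Key steps.} First I would write $M^{A}=R\,\mathcal{M}\,C$ for suitable diagonal matrices $R$ and $C$, where the $(i,j)$th diagonal entry of $R$ and the $(k,l)$th diagonal entry of $C$ absorb the factors $e^{ki}f^{jl}$. The obstruction is that $e^{ki}f^{jl}$ is genuinely \emph{bilinear} in the two index pairs (it couples the row-index $i$ with the column-index $k$, and the row-index $j$ with the column-index $l$), so it does \emph{not} split as a pure row-scaling times column-scaling in the naive way. This is the heart of the matter and the step I expect to be the main obstacle. The correct move is to perform a change of variables: define new diagonal matrices whose entries are $e^{i^{2}/\text{?}}$-type quantities, or better, to invoke the multiplicativity relations~(\ref{eq.2.4}) and~(\ref{eq.2.5}) together with Lemma~\ref{lem.3.1} (each $\beta$ is an $n$-th root of unity) to show that the ``quadratic'' diagonal twist by $e$ and $f$ can be realized by conjugating $\mathcal{M}$ by a diagonal matrix $\Delta$, i.e.\ $M^{A}=\Delta\,\mathcal{M}\,\Delta^{-1}$ possibly composed with a symmetric correction. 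If $M^{A}=\Delta\mathcal{M}\Delta^{-1}$ for an invertible diagonal $\Delta$, then $\det M^{A}=\det\mathcal{M}=\det D(\xi,\xi^{-1})$ \emph{on the nose}, with no spurious sign, because conjugation preserves the determinant exactly.

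\textbf{Carrying it out.} To set up the conjugation I would exploit the symmetry $e^{ki}f^{jl}$ under swapping $(i,j)\leftrightarrow(k,l)$: the exponent $ki+jl$ is symmetric in the two index pairs, which is precisely the signature of a conjugation twist rather than an asymmetric scaling. I would therefore look for a diagonal matrix $\Delta=\mathrm{diag}(\delta_{(i,j)})$ with $\delta_{(i,j)}\delta_{(k,l)}^{-1}\cdot(\text{something symmetric})=e^{ki}f^{jl}$; the symmetric part must be handled by noting it factors through the $n$-th root of unity structure. Because $e,f$ are $n$-th roots of unity (Lemma~\ref{lem.3.1}, and $e=f=1$ when $n$ is odd by Lemma~\ref{lem.3.2}), the quadratic forms $e^{i^2}$, $f^{j^2}$ are well-defined on $\mathbb{Z}_n$ when $n$ is odd, and a parity correction handles the even case via $\beta((i,i),(i,i))=\pm1$. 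The upshot is $\det M^{A}=\det(\Delta\mathcal{M}\Delta^{-1})=\det\mathcal{M}$, and since $\mathcal{M}=D(\xi,\xi^{-1})$ by Theorem~\ref{commutation.regev}, we obtain the claimed exact identity $\det M^{A}=\det D(\xi,\xi^{-1})$. The delicate point throughout is ensuring the twisting matrix is a genuine \emph{conjugation} (preserving determinant exactly) rather than an unbalanced left/right multiplication (which would only control the determinant up to the ratio of the scalings), and verifying that the well-definedness of the quadratic twist over $\mathbb{Z}_n$ holds in both parities of $n$.
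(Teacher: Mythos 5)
Your central mechanism --- realizing the twist by a diagonal similarity $M^{A}=\Delta\,\mathcal{M}\,\Delta^{-1}$ --- is not merely left unverified, it is provably impossible, and since every other step of your plan funnels into it, this is a fatal gap. Write $g=(i,j)$, $h=(k,l)$, so that $M^{A}_{g,h}=s(g,h)\,\mathcal{M}_{g,h}$ with twist $s(g,h)=e^{ki}f^{jl}$, where in fact $e,f\in\{\pm1\}$ by (\ref{eq.2.3}) applied at $g=h$ (a sharper fact than the $n$-th-root statement you quote). A conjugation contributes the twist $t(g,h)=\delta_{g}\delta_{h}^{-1}$; evaluating at $h=(0,0)$, where $s(g,0)=1$ by (\ref{eq.2.4}), forces $\delta_{g}=\delta_{(0,0)}$ for every $g$, hence $t\equiv1$, while $s\not\equiv1$ whenever $(e,f)\neq(1,1)$. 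This is the concrete face of the symmetry obstruction you yourself flag: $t$ is inverted under $g\leftrightarrow h$ while $s$ is symmetric, and a twist of conjugation type that is symmetric and normalized at $0$ is trivial. Unbalanced scalings $M^{A}=R\,\mathcal{M}\,C$ with diagonal $R,C$ fail for the same structural reason: they produce a separable twist $r_{g}c_{h}$, i.e.\ a rank-one twist matrix, whereas $\big(s(g,h)\big)_{g,h}$ contains a nonsingular block $\left(\begin{smallmatrix}1&1\\1&-1\end{smallmatrix}\right)$ as soon as $e$ or $f$ equals $-1$. Your escape hatches are exactly where the content should live and are never constructed: ``composed with a symmetric correction'' is undefined (if it means Hadamard multiplication by $s$ itself, the claim to be proved is untouched), and the quadratic twist $e^{i^{2}}$ cannot help, because $e^{ki}=e^{((i+k)^{2}-i^{2}-k^{2})/2}$ couples the row and column indices through $i+k$, which no product of a row function with a column function reproduces. (Also note your parity discussion is inverted: when $n$ is odd the twist is trivial by Lemma \ref{lem.3.2} and there is nothing to do; the even case is the only one that matters.)

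Moreover, no argument forcing the equality ``on the nose'' through a determinant-preserving transformation can be uniformly valid: for $n=2$, $\xi=-1$, $e=-1$, $f=1$, a direct check shows $M^{A}$ equals $\mathcal{M}$ with exactly two rows interchanged (columns untouched), so $\det M^{A}=-\det\mathcal{M}=16\neq-16$. The sign is therefore genuinely delicate --- only the up-to-sign statement (in particular, simultaneous vanishing, which is all that Section 4 uses) can be expected from a structural argument --- and any proof must do parity bookkeeping rather than exhibit a similarity. That is what the paper does: it takes the case analysis $e,f\in\{\pm1\}$, invokes Lemma \ref{lem.3.2} to reduce the nontrivial cases to even $n$, and compares the two determinants term by term in their permutation expansions, checking that the accumulated sign in each term is $(-1)^{n\cdot(\text{integer})}=1$. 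You assemble the right inputs (Proposition \ref{lem.class}, Lemmas \ref{lem.3.1} and \ref{lem.3.2}, Theorem \ref{commutation.regev}), but the bridge between $M^{A}$ and $\mathcal{M}$ must be built at the level of the Leibniz expansion (or of explicit row comparisons), not by diagonal conjugation.
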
 
\begin{proof}The proof will be carried out by exhausting all possible cases for $e$ and $f$.

If $e=f=1$, by Proposition~\ref{lem.class}, the bicharacter is given by $\beta((i,j),(k,l))=\xi^{jk-il}$. It follows that $\det M^{A}=\det D(\xi,\xi^{-1})$ (see also Example \ref{pauli.matrix}) and we are done. 

Now we can assume $e$ and $f$ are not simultaneously equal to $1$. Also, without loss of generality, by Proposition \ref{lem.3.2}, we can assume $n$ is even.

If $e=-1$ and $f=1$, the bicharacter associated to the regular decomposition of $A$ is given by
\[
	\beta((i,j),(k,l))=(-1)^{jl}\xi^{jk-il}.
\]
Let us recall that if $B = (b_{uv})_{uv} \in M_r(K)$, then
\begin{equation}
\label{eq.3.9}
\det B = \sum_{\sigma \in S_r} (-1)^{\sigma} b_{1\sigma(1)} b_{2\sigma(2)} \cdots b_{r\sigma(r)}.
\end{equation}
If we denote by $S$ the permutation group on the set $\{0, 1, \dots, n-1\}$, given $\sigma \times \sigma' \in S \times S$, we can define $A_{\sigma \times \sigma'}$ as the product 
\[
\beta((0,0), (\sigma(0), \sigma'(0))) \beta((0,1), (\sigma(0), \sigma'(1))) \cdots \beta((n-1,n-1), (\sigma(n-1), \sigma'(n-1)))
\]
hence, because of (\ref{eq.3.9}), we have
\begin{equation}
\label{eq.3.10}
\det M^{A}=\sum_{\sigma\times \sigma'\in S\times S} A_{\sigma\times \sigma'}. 
\end{equation}

If $\widetilde{\beta}$ is the bicharacter given by $\widetilde{\beta}((i,j),(k,l))=\xi^{jk-il}$, then there exist $p_{\sigma\times \sigma'}\in K[x]$, $\sigma\times \sigma'\in S\times S$, such that
\[
\det\Big[\Big(\widetilde{\beta}((i,j),(k,l))\Big)_{(i,k),(k,l)}\Big]=\sum_{\sigma \times \sigma' \in S \times S} p_{\sigma \times \sigma'}(\xi).
\]
Now, by Theorem \ref{commutation.regev}, $\det\Big[\Big(\widetilde{\beta}((i,j),(k,l))\Big)_{(i,k),(k,l)}\Big]=\det D(\xi,\xi^{-1})$, and we obtain
\[
\sum_{\sigma \times \sigma' \in S \times S} p_{\sigma \times \sigma'}(\xi)=\det D(\xi,\xi^{-1}).
\]
If $\gamma_{\sigma \times \sigma'} = \sum_{j=1}^{n-1} j \sigma'(j)$, we have
\[
A_{\sigma\times \sigma'}= \Big(\underbrace{(-1)^{\gamma_{\sigma\times \sigma'}}}_{(0,*)}\ast\Big)\Big(\underbrace{(-1)^{\gamma_{\sigma\times \sigma'}}}_{(1,*)}\ast\Big)\cdots \Big(\underbrace{(-1)^{\gamma_{\sigma\times \sigma'}}}_{(n-1,*)}\ast\Big),
\]
that is
\[
A_{\sigma \times \sigma'} = (-1)^{n\gamma_{\sigma \times \sigma'}} p_{\sigma \times \sigma'}(\xi).
\]
On the other hand, since $n$ is even, $n\gamma_{\sigma \times \sigma'}$ is even too, hence, $A_{\sigma \times \sigma'} = p_{\sigma \times \sigma'}(\xi)$. Consequently,
\[
\det M^{A} = \sum_{\sigma \times \sigma' \in S \times S} A_{\sigma \times \sigma'} = \sum_{\sigma \times \sigma' \in S \times S} p_{\sigma \times \sigma'}(\xi) = \det D(\xi,\xi^{-1}).
\]
The case when $e=1$ and $f=-1$ is analogous to the second one, and thus, similarly, we obtain $\det M^{A}=\det D(\xi,\xi^{-1})$.

Finally, in the last case, when $e=f=-1$, the bicharacter is given by
\[
\beta((i,j),(k,l)) = (-1)^{ki + jl}\xi^{jk - il}.
\]
Consider the same $S$, $A_{\sigma \times \sigma'}$, and $p_{\sigma\times \sigma'}$ as above.  In this way we get
\[
A_{\sigma \times \sigma'} = (-1)^{\theta_{\sigma \times \sigma'}}p_{\sigma \times \sigma'}(\xi)
\] 
and
\[
\theta_{\sigma \times \sigma'} = \gamma_{\sigma\times \sigma'} + (\sigma(1)n + \gamma_{\sigma \times \sigma'}) + \cdots + ((n-1)\sigma(n-1)n + \gamma_{\sigma \times \sigma'}).
\] 
Hence
\[
\theta_{\sigma \times \sigma'} = n\Big(\sum_{t=1}^{n-1}t\sigma(t)\Big) + n\gamma_{\sigma \times \sigma'} = n\Big(\sum_{t=1}^{n-1}t\sigma(t) + \gamma_{\sigma \times \sigma'}\Big).
\]
Since $n$ is even, $\theta_{\sigma \times \sigma'}$ is even as well. Thus $A_{\sigma \times \sigma'}= (-1)^{\theta_{\sigma\times \sigma'}}p_{\sigma\times \sigma'}(\xi)=p_{\sigma\times \sigma'}(\xi)$, then
\[
\det M^{A} = \sum_{\sigma \times \sigma' \in S \times S} A_{\sigma \times \sigma'}= \sum_{\sigma \times \sigma' \in S \times S}p_{\sigma \times \sigma'}(\xi) = \det D(\xi,\xi^{-1})
\] and the proof is complete.\qedhere
\end{proof}

\section{An explicit counterexample}

In this section, we present an explicit counterexample for Conjecture  \ref{conjecture}, when the field $K$ is algebraically closed and of characteristic $p>2$. First we construct the relatively free graded regular algebra.  

\subsection{The relatively free graded algebra}

Given a finite group $G$ and a bicharacter $\beta\colon G\times G\to K^{\ast}$, Aljadeff and David in \cite{Eli1} showed that it is possible to construct a $G$-graded regular algebra with $\beta$ as its associated bicharacter. For this purpose, they constructed the relatively free graded algebra. It happens that when the group $G$ is abelian, one can obtain a more explicit construction of this algebra. We will do this here; we also provide an example which shows that this algebra is not quite effective from the point of view of studying the ordinary polynomial identities of the algebra. 

Let $G$ be a finite abelian group and $\beta\colon G\times G\rightarrow K^{\ast}$ be a bicharacter of $G$. Consider the free $G$-graded algebra $K\langle X_{G}\rangle$. Recall that the integer degree (or simply the degree) of a monomial in $K\langle X_{G}\rangle$, denoted by $\deg_{\mathbb{Z}}(\cdot)$, is defined as follows: If $w$ is the monomial $w = x_{i_{1}}^{(g_{1})} \cdots x_{i_{n}}^{(g_{n})}$, then $\deg_{\mathbb{Z}}(w)=n$.  Also, the homogeneous degree of $w=x_{i_{1}}^{(g_{1})} \cdots x_{i_{n}}^{(g_{n})}$ is defined as $\deg (w)=g_{1}+\cdots+g_{n}$. 

Denote by $\Omega_{G}$ the following family of polynomials in $K\langle X_{G}\rangle$:
\begin{equation}
\label{free.graded}
\Omega_{G} = \{x_{i}^{(g)}x_{j}^{(h)}-\beta(g,h)x_{j}^{(h)}x_{i}^{(g)}\mid g,h\in G,\quad i,j\in \mathbb{N}\}.
\end{equation}
Let $I$ be the $T_{G}$-ideal of $K\langle X_{G}\rangle$ generated by $\Omega_{G}$. By definition, if $\mathscr{E}_{G}$ stands for the set of the graded endomorphisms of $K\langle X_{G}\rangle$, every element $y \in I$ is of the form
\[
y =\sum \alpha_{i,j}^{(g,h,f)} z_{i,j} \big( f_{i}^{(g)} f_{j}^{(h)} - \beta(g,h) f_{j}^{(h)} f_{i}^{(g)} \big)w_{ij},
\]
where $z_{i,j}, w_{i,j} \in K\langle X_{G}\rangle$, $f \in \mathscr{E}_{G}$, $\alpha_{i,j}^{(g,h,f)} \in K$, and $f_{i}^{(g)}$ and $f_{j}^{(h)}$ are the images of $x_{i}^{(g)}$ and $x_{j}^{(h)}$, respectively, under the graded endomorphism $f$.

In $K\langle X_{G}\rangle$, modulo the $T_{G}$-ideal $I$, we can commute two elements, up to scalars determined by $\beta(\cdot,\cdot)$. We denote with the same letter $x$ the image of an element $x\in X_{G}$ under the projection $K\langle X_{G}\rangle\to  K\langle X_{G}\rangle/I$. It follows that the homogeneous component of degree $g \in G$ in the $G$-graded algebra 
\[
B:= K\langle X_{G}\rangle/I=\bigoplus_{g\in G}B_{g}
\]
is spanned by monomials of the form
\[
 w=x_{i_{1}}^{(g_{i_{1}})}\cdots x_{i_{n}}^{(g_{i_{n}})}\quad i_{1}\leq \cdots\leq  i_{n},\quad g_{1}+\cdots+g_{n}=g\quad n\in \mathbb{N}.
\]

The proof of the next theorem can be found for finite (possibly non abelian) groups in \cite[Proposition 29]{Eli1}.

\begin{Theorem}
\label{uni.reg.}
The $G$-graded algebra $B$ defined above is regular, and $\beta$ is its associated bicharacter.
\end{Theorem}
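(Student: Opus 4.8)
The plan is to verify the two defining conditions of a $G$-graded regular algebra from Definition \ref{def.reg.1} directly for $B$. Condition (ii), the existence of the commutation scalars, is essentially built into the relations $\Omega_{G}$, but one must promote the identity from the generators $x_{i}^{(g)}$ to \emph{arbitrary} homogeneous elements of $B_{g}$ and $B_{h}$. Condition (i), the nonvanishing of products, is the substantive point: it amounts to showing that passing to the quotient by $I$ did not collapse the spanning monomials, that is, that the sorted monomials described just before the statement are in fact \emph{linearly independent}. I expect this linear independence to be the main obstacle.

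For condition (ii), I would first establish, by induction on the integer degree $\deg_{\mathbb{Z}}(w)+\deg_{\mathbb{Z}}(w')$, that any two monomials $w,w'$ with homogeneous degrees $\deg(w)=g$ and $\deg(w')=h$ satisfy $ww'=\beta(g,h)\,w'w$ in $B$. The base case is the defining relation in $\Omega_{G}$. For the inductive step I would write $w=u\,x_{i}^{(a)}$, move the last letter of $w$ through $w'$ using the case of shorter length to get $x_{i}^{(a)}w'=\beta(a,h)\,w'x_{i}^{(a)}$, then move $u$ through $w'$, and finally combine the two scalars via $\beta(a,h)\beta(g-a,h)=\beta(g,h)$, which is exactly (\ref{eq.2.5}). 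Extending bilinearly over $K$, every $a_{g}\in B_{g}$ and $a_{h}\in B_{h}$ then satisfy $a_{g}a_{h}=\beta(g,h)a_{h}a_{g}$ with $\beta(g,h)\in K^{\ast}$, which is (ii).

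For condition (i), the approach I would take is to realize $B$ through a terminating, confluent rewriting system and apply Bergman's Diamond Lemma. Fix a total order on the variable set $X_{G}$ (first by the index $i\in\mathbb{N}$, then by a fixed total order on $G$) and orient the relations so that a descending adjacent pair rewrites as $x_{i}^{(g)}x_{j}^{(h)}\mapsto \beta(g,h)\,x_{j}^{(h)}x_{i}^{(g)}$. Here one uses that $\beta(g,g)=\pm1$: indeed $\beta(g,g)=\beta(g,g)^{-1}$ by (\ref{eq.2.3}), so when $\beta(g,g)=-1$ the relation forces $2\,(x_{i}^{(g)})^{2}=0$, whence $(x_{i}^{(g)})^{2}=0$ in $B$ since $char(K)=p>2$; I would add the reduction $(x_{i}^{(g)})^{2}\mapsto 0$ in that case. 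Termination follows from a well-founded order (length, then number of inversions). The heart of the argument is confluence of the overlap ambiguities: for the triple $x_{i}^{(g)}x_{j}^{(h)}x_{k}^{(f)}$ the two reduction orders both produce the scalar $\beta(g,h)\beta(g,f)\beta(h,f)$ on the fully sorted word, simply because scalars commute in $K$; the ambiguities involving the squaring rule resolve to $0$ on both sides. By the Diamond Lemma the irreducible monomials, namely the sorted ones in which $x_{i}^{(g)}$ occurs at most once whenever $\beta(g,g)=-1$, form a $K$-basis of $B$; in particular this upgrades the spanning statement preceding the theorem to a basis.

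With the basis in hand, condition (i) is immediate: given any tuple $(g_{1},\dots,g_{n})\in G^{n}$, choose the elements $x_{1}^{(g_{1})},x_{2}^{(g_{2})},\dots,x_{n}^{(g_{n})}$ with pairwise distinct indices $1<2<\cdots<n$. Their product $x_{1}^{(g_{1})}\cdots x_{n}^{(g_{n})}$ is already sorted and uses no variable twice, hence is an irreducible monomial and therefore a nonzero basis element of $B_{g_{1}+\cdots+g_{n}}$. This verifies (i), and together with (ii) shows that $B$ is $G$-graded regular with associated bicharacter $\beta$. The only genuinely delicate point, as anticipated, is the confluence verification underlying the Diamond Lemma; everything else is a routine induction or bookkeeping. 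One could alternatively invoke \cite[Proposition 29]{Eli1}, valid for arbitrary finite groups, but the rewriting argument yields the explicit basis that is convenient in the abelian setting.
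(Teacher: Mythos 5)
Your proof is correct, but it does not follow the paper, because the paper offers no proof of this theorem at all: it simply defers to \cite[Proposition 29]{Eli1}, which establishes regularity of the relatively free graded algebra for an arbitrary finite, possibly nonabelian, group. What you do differently is give a self-contained argument in the abelian case by presenting $B$ through a confluent rewriting system and applying the Diamond Lemma; this upgrades the paper's spanning set of sorted monomials to an explicit $K$-basis, which is genuinely more information than the paper records and makes condition (i) transparent. You also correctly isolate the one delicate point: by (\ref{eq.2.3}) one has $\beta(g,g)=\pm1$, and when $\beta(g,g)=-1$ the relation forces $2(x_i^{(g)})^2\in I$, hence $(x_i^{(g)})^2=0$ in $B$ since $\mathrm{char}\,K>2$, so condition (i) must be witnessed by variables with pairwise distinct indices --- possible exactly because $X_G$ has infinitely many variables of each homogeneous degree. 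Two minor comments. Condition (ii) is in fact immediate from $I$ being a $T_G$-ideal (any $a_g\in B_g$, $a_h\in B_h$ are images of $x_1^{(g)}$, $x_2^{(h)}$ under a graded endomorphism); your induction instead proves the equivalent and independently useful fact that this $T_G$-ideal coincides with the ordinary two-sided ideal generated by the generator relations, which is what legitimizes the rewriting presentation. And for the Diamond Lemma a cleaner compatible semigroup well-order is degree-lexicographic with respect to the chosen total order on $X_G$; with that in place the overlap resolutions you indicate (two commutation rules on $x_ax_bx_c$ yielding the common scalar $\beta_{ab}\beta_{ac}\beta_{bc}$, and commutation--square overlaps resolving to $0$ on both sides, using $\beta(g,h)\beta(h,g)=1$) are exactly the computations required, and they do go through.
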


\begin{Remark} Notice that, in the construction of $B$, one cannot assume that $I$ is the $T$-ideal (in the ordinary sense) generated by $\Omega_{G}$. Indeed,  suppose that $I$ is the $T$-ideal generated by $\Omega_{G}$ and assume that there exist $g$, $h \in G$ such that $\beta(g,h)=-1$.

Given any $i$, $j\in \mathbb{N}$, let $z =x_{i}^{(g)}$, $w =1$, and let $\phi$ be an endomorphism of $K\langle X\rangle$ satisfying
\[
\phi(x_{i}^{(g)})=x_{j}^{(h)}\quad \text{and}\quad  \phi(x_{j}^{(h)})=1.
 \]
 In this way, we have
 \[
\dfrac{1}{2}z\Big(\phi(x_{i}^{(g)})\phi(x_{j}^{(h)})-\beta(g,h)\phi(x_{j}^{(h)})\phi(x_{i}^{(g)})\Big)w=\dfrac{1}{2}(x_{i}^{(g)}x_{j}^{(h)}+x_{i}^{(g)}x_{j}^{(h)})=x_{i}^{(g)}x_{j}^{(h)}.
\]
However, since
\[
\dfrac{1}{2}z\Big(\phi(x_{i}^{(g)})\phi(x_{j}^{(h)})-\beta(g,h)\phi(x_{j}^{(h)})\phi(x_{i}^{(g)})\Big)w\in I
\]
we conclude that $x_{i}^{(g)}x_{j}^{(h)}\in I$. 
Therefore, in this case, $B$ fails to satisfy the first condition of regularity.

\end{Remark}

\subsection{Constructing the counterexample }

In this subsection we will assume that $K$ is an algebraically closed field of characteristic $p>2$. 

If $k$, $m\in \{0, \ldots,p-1\}$, with $k$ even and $m$ odd, then $(-1)^{k} \neq (-1)^{m}$. We shall use the standard notation $(m,n)$ to denote the greatest common divisor of the integers $n$ and $m$.

\begin{Theorem} There exist $n\in \mathbb{N}$ and a $(\mathbb{Z}_{n}\times \mathbb{Z}_{n})$-graded regular algebra $B$ such that  the regular decomposition of $B$ is minimal whereas $\det M^{B}=0$.
\end{Theorem}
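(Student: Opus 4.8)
The plan is to exhibit an explicit $n$ and a bicharacter $\beta$ on $G=\mathbb{Z}_n\times\mathbb{Z}_n$ whose associated matrix $M^A$ is both \emph{minimal} (no two columns coincide) and \emph{singular} ($\det M^A=0$), and then invoke Theorem~\ref{uni.reg.} to realize $\beta$ by an actual $(\mathbb{Z}_n\times\mathbb{Z}_n)$-graded regular algebra $B$. This reduces the whole problem to a combinatorial/linear-algebraic construction at the level of bicharacters, so the existence of the algebra itself is free once the matrix is produced.

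By Proposition~\ref{lem.class} every admissible bicharacter has the shape $\beta((i,j),(k,l))=e^{ki}f^{jl}\xi^{jk-il}$, so I have exactly three free parameters $e$, $f$, $\xi$ (all roots of unity, by Lemma~\ref{lem.3.1}). First I would arrange \emph{singularity}. Theorem~\ref{det.decomposi.equal} says $\det M^A=\det D(\xi,\xi^{-1})$ regardless of the values of $e,f$, and by the formula \eqref{eq.2.6} in Theorem~\ref{teo.pauli.grading} this determinant contains the factor $\prod_{i=1}^{n-1}(1-\xi^i)$. Hence $\det M^A=0$ as soon as $\xi$ is an $n$-th root of unity that is \emph{not primitive}: then some $\xi^i=1$ with $1\le i\le n-1$ and the product vanishes. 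So I would deliberately choose $\xi$ to be a nonprimitive $n$-th root of unity — this is exactly where positive characteristic is essential, since in characteristic $0$ minimality forces $\xi$ primitive. Concretely one takes $n=d\cdot p^{s}$ type divisibility (or simply lets $\xi$ have order a proper divisor of $n$) so that the grading group is genuinely larger than the order of $\xi$.

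The delicate half is \emph{minimality}: having killed the determinant, I must still guarantee that no two columns of $M^A$ are equal, i.e.\ that for all distinct $(k,l),(k',l')\in G$ there is some $(i,j)$ with $\beta((i,j),(k,l))\neq\beta((i,j),(k',l'))$. Equivalently the map $(k,l)\mapsto \big((i,j)\mapsto \beta((i,j),(k,l))\big)$ from $G$ into the character group must be injective. Using the closed form, $\beta((i,j),(k,l))=e^{ki}f^{jl}\xi^{jk-il}$, equality of the $(k,l)$- and $(k',l')$-columns amounts to the system $e^{i(k-k')}\xi^{-i(l-l')}=1$ and $f^{j(l-l')}\xi^{j(k-k')}=1$ for all $i,j$, i.e.\ $e^{\Delta k}=\xi^{\Delta l}$ and $f^{\Delta l}=\xi^{-\Delta k}$ where $\Delta k=k-k'$, $\Delta l=l-l'$. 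This is where the extra parameters $e,f$ earn their keep: I would choose $e$ and $f$ (exploiting the hint about $(-1)^{\text{even}}\ne(-1)^{\text{odd}}$, so presumably $e$ or $f$ equal to $-1$, which is allowed only in even $n$ by Lemma~\ref{lem.3.2}) so that these relations force $\Delta k=\Delta l=0$. The main obstacle is precisely balancing the two requirements against each other: making $\xi$ nonprimitive creates potential column collisions coming from the $\xi^{jk-il}$ factor, and the burden is to show that the correction factors $e^{ki}$, $f^{jl}$ refine the grading enough to separate all those columns. I would carry this out by solving the collision system modulo $n$ explicitly for the chosen numerical values of $n$, $e$, $f$, $\xi$, tracking the orders of $e$, $f$, $\xi$ as divisors of $n$ and checking the injectivity case by case (odd vs.\ even $n$ handled via Lemma~\ref{lem.3.2}). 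Once a single explicit quadruple $(n,e,f,\xi)$ passes both tests, Theorem~\ref{uni.reg.} furnishes the algebra $B$ and the theorem is proved.
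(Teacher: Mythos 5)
Your proposal follows the paper's proof essentially verbatim: realize a bicharacter $e^{ki}f^{jl}\xi^{jk-il}$ via Theorem~\ref{uni.reg.}, kill the determinant by taking $\xi$ a nonprimitive $n$-th root of unity through Theorem~\ref{det.decomposi.equal} and formula~\eqref{eq.2.6}, and restore minimality with sign twists $e=f=-1$; your collision system $e^{\Delta k}=\xi^{\Delta l}$, $f^{\Delta l}=\xi^{-\Delta k}$ is exactly the computation the paper performs. The one step you defer — producing the explicit quadruple — is settled in the paper by taking $n=2t$ with $t>2$ a prime coprime to $p$, $\xi=\zeta$ a primitive $t$-th root of unity, and $e=f=-1$, whereupon the coprimality of the orders $2$ and $t$ forces $\Delta k\equiv\Delta l\equiv 0 \pmod{2t}$, completing your argument.
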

\begin{proof}
    Let $t>2$ be a prime integer coprime with $p$. Now, consider the polynomial $h(x)=x^{t}-1$. Since $p$ does not divide $t$, $h'(x)=tx^{t-1}=0$ if and only if $x=0$, and therefore $h(x)$ is a separable polynomial over $K$. Since $K$ is algebraically closed, $h(x)$ has $t$ distinct roots in $K$, which are $t$-th roots of unity. The set of these roots forms a cyclic group $U_{t}\subseteq K^{\ast}$. Let $\zeta$ denote a generator of $U_{t}$, since $t$ is prime, $\zeta$ is a primitive $t$-th root of unity.

Now, consider $n:=2t$ and define
\[
\beta\colon (\mathbb{Z}_{n}\times \mathbb{Z}_{n})\times (\mathbb{Z}_{n}\times \mathbb{Z}_{n})\rightarrow K^{\ast},\quad ((i,j),(k,l))\mapsto (-1)^{ik+jl}\zeta^{jk-il}.
\]
The function $\beta$ is a bicharacter of $\mathbb{Z}_{n}\times \mathbb{Z}_{n}$. In fact,
\[
\beta((i,j)+(i',j'),(k,l))=(-1)^{(i+i')k+(j+j')l}\zeta^{(j+j')k - (i+i')l},
\]
and hence,
\begin{align*}
\beta((i,j)+(i',j'),(k,l))& =(-1)^{ik+jl} \zeta^{jk-il}(-1)^{i'k+j'l}\zeta^{j'k - i'l}\\
&=\beta((i,j),(k,l))\beta((i',j'), (k,l)).
\end{align*}
In a similar way, it can be shown that 
\[
\beta((i,j),(k,l)+(k',l'))=\beta((i,j),(k,l))\beta((i,j),(k',l')).
\]
 Finally,
\[
\beta((k,l),(i,j))=(-1)^{ki+jl}\zeta^{il-jk}=\beta((i,j), (k,l))^{-1}.
\]
Hence, it follows from  Theorem \ref{uni.reg.} that there exists a $(\mathbb{Z}_{n}\times \mathbb{Z}_{n})$-graded regular algebra $B$ with $\beta$ as its associated bicharacter. 

But $r$, $s\in \{0,1,\ldots, n-1\}$ satisfy
$r\equiv s\pmod{t}$ if and only if $r-s=t$. Indeed, otherwise, if $r=s+kt$, with $k>1$, then $r-s=kt\geq 2t=n$, a contradiction. In particular, since $t$ is odd, $r$ and $s$ must have different parities.

Suppose that there are two equal columns in $M^{B}$, say $(r,s)$ and $(r',s')$, then
\[
(-1)^{ri+js}\zeta^{jr-is}=(-1)^{r'i+js'}\zeta^{jr'-is'},\quad \text{for every }\quad (i,j).
\]
In particular, analyzing respectively the entries related to rows $(0,2)$ and $(2,0)$, we have
\[
(-1)^{2s}\zeta^{2r}=(-1)^{2s'}\zeta^{2r'}, \qquad 
(-1)^{2r}\zeta^{-2s}=(-1)^{2r'}\zeta^{-2s'}
\]
which implies that $r\equiv r'\pmod{t}$ and $s\equiv s'\pmod{t}$.

As a consequence, $r$ and $r'$ have different parities (and so do $s$ and $s'$). On the other hand, the equality $\beta((1,0),(r,s))=\beta((1,0),(r',s'))$ is equivalent to
\[
(-1)^{r}\zeta^{-s}=(-1)^{r'}\zeta^{-s'}
\]
and since $\zeta^{-s}=\zeta^{-s'}$, it follows that $(-1)^{r}=(-1)^{r'}$.  But this is a contradiction since $r$ and $r'$ have different parities. Thus, $M^{B}$ cannot have two equal columns, and consequently, the regular decomposition of $B$ is minimal. Nonetheless we will show that $\det M^{B}=0$. As seen previously, by Theorem \ref{det.decomposi.equal} we have $\det M^{B}=\det D(\zeta,\zeta^{-1})$, and so it follows from Theorem \ref{teo.pauli.grading} that

\[
\det M^{B}=\zeta^{\Big(\dfrac{(n-1)^{2}(n-n^{2})}{2}\Big )}\Big(\prod_{i=1}^{n-1}(1-\zeta^{i})\Big).
\]
Since $\zeta^{t}=1$ we conclude that $\det M^{B}=0$
\end{proof}

\section{The counterexample for the conjecture in the general case}

We assume, as above, that $K$ is a field of characteristic $p>2$. In this section we show that there exists a finite abelian group $G$ and a suitable choice of a cocycle $\alpha \in H^{2}(G, K^{\ast})$ such that the twisted group algebra $K^{\alpha}G$ has a minimal regular decomposition, yet $\det M^{K^{\alpha}G}=0$. First we recall several results concerning twisted group algebras.

\subsection{The twisted group algebra}

The twisted group algebra was defined in Example \ref{twisted.group}. A fundamental result about such algebras is Scheunert's Lemma, which enables us to construct cocycles from specific bicharacters. 

\begin{Theorem}[\cite{scheunert}, Lemma 2] 
\label{scheunert}
Let $G$ be a finite abelian group. Given a bicharacter $\beta\colon G\times G\rightarrow K^{\ast}$ with $\beta(g,g)=1$ for every $g\in G$, there exists $\alpha\in H^{2}(G,K^{\ast})$ such that 
 \[
\beta(g,h)=\alpha(g,h)\alpha(h,g)^{-1} \quad \text{for every}\quad g,h\in G.
 \]
 \end{Theorem}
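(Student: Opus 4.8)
The plan is to reduce everything to the values of $\beta$ on a fixed generating set and then build $\alpha$ as a bimultiplicative map supported on the "upper triangle". First I would invoke the structure theorem for finite abelian groups to write $G=\langle g_1\rangle\oplus\cdots\oplus\langle g_s\rangle$, where $g_i$ has order $n_i$, so that every element has a unique expression $\sum_{i=1}^{s} a_ig_i$ with $0\le a_i<n_i$. Writing $\beta_{ij}:=\beta(g_i,g_j)$, conditions (\ref{eq.2.4}) and (\ref{eq.2.5}) show that $\beta$ is completely determined by the $\beta_{ij}$ via $\beta(\sum a_ig_i,\sum b_jg_j)=\prod_{i,j}\beta_{ij}^{a_ib_j}$. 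Moreover $\beta_{ij}^{n_i}=\beta(n_ig_i,g_j)=1$ and likewise $\beta_{ij}^{n_j}=1$, so each $\beta_{ij}$ is a root of unity whose order divides $\gcd(n_i,n_j)$; by (\ref{eq.2.3}) we have $\beta_{ji}=\beta_{ij}^{-1}$; and the hypothesis $\beta(g,g)=1$ gives $\beta_{ii}=1$.

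Next I would define $\alpha\colon G\times G\to K^{\ast}$ by
\[
\alpha\Big(\sum_i a_ig_i,\ \sum_j b_jg_j\Big):=\prod_{i<j}\beta_{ij}^{a_ib_j}.
\]
The first thing to check is that this is well defined, that is, independent of the choice of representatives $a_i,b_j$: replacing $a_i$ by $a_i+n_i$ multiplies the product by $\prod_{j>i}\beta_{ij}^{n_ib_j}=1$, and replacing $b_j$ by $b_j+n_j$ multiplies it by $\prod_{i<j}\beta_{ij}^{a_in_j}=1$, precisely because $\beta_{ij}^{n_i}=\beta_{ij}^{n_j}=1$. By construction $\alpha$ is bimultiplicative in both arguments, and a direct substitution into the cocycle condition stated in Example \ref{twisted.group} shows that every bimultiplicative map is a $2$-cocycle: expanding $\alpha(\sigma,\tau+\nu)$ and $\alpha(\sigma+\tau,\nu)$ and using commutativity of $K^{\ast}$, both sides reduce to $\alpha(\sigma,\tau)\alpha(\sigma,\nu)\alpha(\tau,\nu)$. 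Hence $\alpha$ defines a class in $H^{2}(G,K^{\ast})$.

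Finally I would verify the desired identity. From the definition,
\[
\alpha(g,h)\alpha(h,g)^{-1}=\prod_{i<j}\beta_{ij}^{a_ib_j-a_jb_i},
\]
while splitting the full product for $\beta$ into its three parts gives
\[
\beta(g,h)=\prod_{i,j}\beta_{ij}^{a_ib_j}=\Big(\prod_{i<j}\beta_{ij}^{a_ib_j}\Big)\Big(\prod_{i>j}\beta_{ij}^{a_ib_j}\Big)\Big(\prod_i\beta_{ii}^{a_ib_i}\Big).
\]
The diagonal factor is trivial because $\beta_{ii}=1$, and relabelling the lower-triangular factor together with $\beta_{ji}=\beta_{ij}^{-1}$ turns it into $\prod_{i<j}\beta_{ij}^{-a_jb_i}$; comparing with the previous display yields $\beta(g,h)=\alpha(g,h)\alpha(h,g)^{-1}$, as required. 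The only genuinely delicate point is the well-definedness of $\alpha$, which is exactly where the divisibility $\gcd(n_i,n_j)\mid n_i$ (equivalently $\beta_{ij}^{n_i}=1$) is used; everything else is bookkeeping with the bicharacter relations, and the hypothesis $\beta(g,g)=1$ enters only through $\beta_{ii}=1$.
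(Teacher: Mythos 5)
Your argument is correct and complete. Note that the paper does not prove this statement at all: it is quoted as Scheunert's Lemma~2 from \cite{scheunert}, so there is no internal proof to compare against. What you give is the standard construction for finite abelian groups --- split $G$ into cyclic factors, observe that $\beta$ is determined by the values $\beta_{ij}=\beta(g_i,g_j)$ on generators, and take $\alpha$ to be the ``upper--triangular half'' $\prod_{i<j}\beta_{ij}^{a_ib_j}$ --- and all the delicate points are handled: the relations $\beta_{ij}^{n_i}=\beta_{ij}^{n_j}=1$ make $\alpha$ genuinely bimultiplicative (not merely well defined on canonical representatives), bimultiplicativity immediately gives the cocycle identity of Example~\ref{twisted.group}, and the hypothesis $\beta(g,g)=1$ enters exactly where it must, to kill the diagonal factor $\prod_i\beta_{ii}^{a_ib_i}$ when you split $\beta(g,h)=\prod_{i,j}\beta_{ij}^{a_ib_j}$ into upper, lower, and diagonal parts. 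This is a perfectly acceptable self-contained replacement for the citation.
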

Let $G$ be a finite abelian group, $\beta\colon G \times G \to K^{\ast}$ a bicharacter of $G$ satisfying $\beta(g,g)=1$, for every $g\in G$. It follows from Theorem \ref{scheunert}, that there exists $\alpha \in H^{2}(G, K^{\ast})$ such that the regular $G$-graded algebra $B:= K^{\alpha}G$ has $\beta$ as the bicharacter associated with its regular decomposition. It is equivalent to say that $\beta$ is the bicharacter induced by $\alpha$. By considering representatives in $\alpha$, without loss of generality, we  assume that $\alpha(g,0)=\alpha(0, g)=\alpha(0,0)=1$, for every $g\in G$. 

\begin{Remark}
\label{remark.inverse}
By definition, $x_{0}$ is the unit of $K^{\alpha}G$. Note that if $g\in G$, then 
\[
x_{g}^{-1}=\alpha(g,g^{-1})^{-1}x_{g^{-1}}
\]
 In fact, by the multiplication rule we get
\[
x_{g}\Big(\alpha(g,g^{-1})^{-1}x_{g^{-1}}\Big)=\alpha(g,g^{-1})^{-1}\alpha(g,g^{-1})x_{0}=x_{0}.
\]
On the other hand, we have
\[
\alpha(g,g^{-1})=\alpha(g^{-1},0)\alpha(g,g^{-1})=\alpha(g^{-1},g+g^{-1})\alpha(g,g^{-1}),
\]
and by the cocycle condition (see Example \ref{twisted.group}), we obtain
\[
\alpha(g,g^{-1})=\alpha(g^{-1},g+g^{-1})\alpha(g,g^{-1})=\alpha(g^{-1}+g, g^{-1})\alpha(g^{-1},g)=\alpha(g^{-1},g).
\]
Analogously one proves that $\Big(\alpha(g,g^{-1})^{-1}x_{g^{-1}}\Big)x_{g}=x_{0}$.
\end{Remark}

Denote by $\mathcal{C}:=\{x_{g}\mid g\in G\}$ a basis of $B$. We need to describe an element in the centre $Z(B)$ of $B$. Given $z\in Z(B)$,  by definition, we have
\begin{equation}
\label{eq.5.2.1}
x_{g}z=zx_{g}, \quad \text{for every} \quad g\in G.    
\end{equation}
Writing $z=\sum_{w\in G}\mu_{w}x_{w}$, $\mu_{w}\in K$, the multiplication in $B$ implies that condition (\ref{eq.5.2.1}) is satisfied if and only if
\[
\alpha(w,g)=\alpha(g,w),\quad \text{for every}\quad g\in G.
\]
Therefore, setting
\[
Y=\{x_{w}\mid \alpha(g,w)=\alpha(w,g),\quad \text{for every}\quad g\in G \},
\] 
it follows that $Z(B)=\text{\rm span}_{K}(Y)$.

Notice that if $B$ is simple and noncommutative (there exist $s\in G$ and $x\in G$ such that $\alpha(s,x)\neq \alpha(x,s)$) then $\dim Z(B)=1$. Indeed, if $x_{w}\in Z(G)$ and $w\neq 1$, since $B$ is simple, if $I$ is the ideal generated by $x_{w}$, it follows that $I=B$. Now consider $x_{g}x_{w}\in I$ and take any element $h\in G$. Observe that
 \[
(x_{g}x_{w})x_{h}=x_{g}(x_{w}x_{h})=\alpha(w,h)\alpha(g,w+h)x_{g+w+h}.
\]
On the other hand,
\[
x_{h}(x_{g}x_{w})=\alpha(g,w)\alpha(h,g+w)x_{g+w+h}.
\]
By the cocycle condition it follows that $(x_{g}x_{w})x_{h}=x_{h}(x_{g}x_{w})$, then $I$ is commutative. 
 Consequently, $B$ is commutative, but this is a contradiction and we get $Z(B)=\text{\rm span}_{K}\{x_{0}\}$. 

\begin{Remark}
\label{remark.equiv.min}
According to Definition \ref{def.min}, if $B$ is nonminimal, then there exist $g$, $h\in G$ such that $\beta(g,x)=\beta(x,h)$, for every $x\in G$. As $\beta$ is induced by $\alpha$, this is equivalent to 
\[
\beta(g,x)=\beta(x,h)=1,\qquad \alpha(g,x)=\alpha(x,h),\quad \text{for every}\quad x\in G.
\]
As a consequence, $B$ has minimal regular decomposition if and only if $\dim Z(B)=1$.
\end{Remark}

\begin{Lemma}
\label{aux.const.examp.}
    If $B$ is simple and noncommutative, then the regular decomposition of $B$ is minimal.
\end{Lemma}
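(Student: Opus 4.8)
The plan is to read the statement off the description of the centre $Z(B)$ obtained above, combined with Remark~\ref{remark.equiv.min}. That remark reduces minimality of the regular decomposition of $B=K^{\alpha}G$ to the single numerical condition $\dim_{K} Z(B)=1$, so the whole lemma becomes the assertion that a simple, noncommutative twisted group algebra has one-dimensional centre. Thus I would not attack minimality directly but instead compute $\dim_K Z(B)$.

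First I would use the computation preceding the remark: $x_{w}$ is central if and only if $\alpha(g,w)=\alpha(w,g)$ for every $g$, that is, if and only if $w$ lies in the radical $H:=\{w\in G\mid \beta(w,g)=1\text{ for all }g\in G\}$ of the induced bicharacter $\beta$. Since $H$ is a subgroup of $G$ and $\beta$ restricts trivially to $H\times H$, the subspace $Z(B)=\mathrm{span}_{K}\{x_{w}\mid w\in H\}$ is closed under multiplication and coincides with the commutative twisted group algebra $K^{\alpha|_{H}}H$; in particular $\dim_{K}Z(B)=|H|$. Hence it suffices to prove $H=\{0\}$, i.e.\ that no $x_{w}$ with $w\neq 0$ is central.

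The heart of the matter, and where I expect the difficulty to sit, is excluding a nontrivial central $x_{w}$. The clean mechanism is that simplicity of $B$ forces $Z(B)$ to be a field: any nonzero central $z$ generates the nonzero two-sided ideal $Bz=B$, so $z$ is invertible and its inverse is again central. Thus $K^{\alpha|_{H}}H$ must be a field, and the remaining step is to deduce $|H|=1$. When $K$ is algebraically closed this is immediate, since $Z(B)$ is then a finite field extension of $K$ and therefore equals $K$; this is precisely the regime where the argument is cleanest and where the hypothesis on the ground field does its work. An alternative, more computational route is the one set up in the discussion before Remark~\ref{remark.equiv.min}: assume $x_{w}$ central with $w\neq 0$, note that by simplicity the ideal it generates is all of $B$, and use the cocycle identity to analyse the commutators of its generators $x_{g}x_{w}$ with the $x_{h}$, aiming at a contradiction with noncommutativity. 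In either presentation, once $\dim_{K}Z(B)=1$ is established, Remark~\ref{remark.equiv.min} immediately yields that the regular decomposition of $B$ is minimal.
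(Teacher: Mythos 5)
Your overall strategy coincides with the paper's: both proofs reduce minimality to the single condition $\dim_{K}Z(B)=1$ via Remark~\ref{remark.equiv.min}, using the description $Z(B)=\mathrm{span}_{K}\{x_{w}\mid \alpha(w,g)=\alpha(g,w)\ \text{for all}\ g\}\cong K^{\alpha|_{G_{0}}}G_{0}$ obtained just before the lemma. Your first mechanism (in a simple ring every nonzero central element generates the unit ideal, hence is invertible with central inverse, so $Z(B)$ is a field, which must equal $K$ when $K$ is algebraically closed) is correct and complete in the algebraically closed case, and there it is cleaner than the paper's computation.

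The gap is that the lemma is invoked in Subsection~5.4 for $K^{\alpha}\mathrm{Gal}(E/K)$ over an \emph{arbitrary} field of characteristic $p>2$, and your argument stops exactly where the difficulty lies: for non--algebraically-closed $K$, the implication ``$Z(B)$ is a field $\Rightarrow\dim_{K}Z(B)=1$'' fails, and the ``computational route'' you defer to is never executed. Moreover it cannot be executed as sketched. Take $G=\langle a_{1}\rangle\times\langle a_{2}\rangle\times\langle a_{3}\rangle\cong\mathbb{Z}_{2}^{3}$ and the cocycle $\alpha=(-1)^{i_{2}k_{1}}t_{\lambda}(i_{3}a_{3},k_{3}a_{3})$ with $\lambda$ a nonsquare in $K^{*}$ (such $\lambda$ exists e.g.\ for $K=\mathbb{F}_{p}$ or $\mathbb{F}_{p}(t)$): then $Z(B)=Kx_{0}\oplus Kx_{a_{3}}\cong K(\sqrt{\lambda})$ is a field, $B\cong M_{2}(K(\sqrt{\lambda}))$ is simple as a ring and noncommutative, yet $G_{0}=\langle a_{3}\rangle\neq\{0\}$, so the decomposition is \emph{not} minimal. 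This shows the commutation identity $(x_{g}x_{w})x_{h}=x_{h}(x_{g}x_{w})$ that the sketched route needs reduces, via the cocycle condition, to $\alpha(g+w,h)=\alpha(h,g+w)$, i.e.\ to $g+w\in G_{0}$, which is not automatic. So the statement in this generality requires either reading ``simple'' as ``central simple over $K$'' (in which case $\dim_{K}Z(B)=1$ is immediate) or assuming $K$ algebraically closed; as written, your proof establishes the lemma only in the latter regime, which is strictly narrower than the setting in which the paper applies it.
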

\begin{proof}
Since $B$ is simple and noncommutative, as we have seen above, it follows that $\dim Z(B) = 1$. Thus, by Remark \ref{remark.equiv.min}, the regular decomposition of $B$ is minimal.    
\end{proof}
\subsection{The counterexample for infinite fields}

We start with computing the determinant of the decomposition matrix of a twisted group algebra. To this end we apply a technique introduced by Aljadeff and David in \cite{Eli1}.

\begin{Lemma}
\label{average.sum}
Let $G$ be a finite abelian group and let $\beta\colon G \times G \rightarrow K^{\ast}$ be a nontrivial bicharacter of $G$ satisfying $\beta(g,g) = 1$ for every $g \in G$. If $0 \neq a \in G$, then $\sum_{g\in G}  \beta(g,a)=0$.
\end{Lemma}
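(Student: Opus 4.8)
The plan is to recognize the sum as an instance of the orthogonality relations for characters and to establish it by the standard averaging argument, which is characteristic-free. Fix $a \in G$ with $a \neq 0$ and consider the map $\chi_a \colon G \to K^{\ast}$ defined by $\chi_a(g) = \beta(g,a)$. By the bicharacter identity (\ref{eq.2.5}), $\chi_a$ is a group homomorphism, that is $\chi_a(g+g') = \chi_a(g)\chi_a(g')$ for all $g,g' \in G$; moreover each value $\chi_a(g)$ is a root of unity, by the same reasoning as in Lemma \ref{lem.3.1}. Write $S = \sum_{g \in G} \beta(g,a)$ for the sum to be evaluated.

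The averaging step runs as follows. Suppose we can produce $h \in G$ with $\chi_a(h) = \beta(h,a) \neq 1$. Since the translation $g \mapsto h + g$ is a bijection of $G$, reindexing the sum gives
\[
\beta(h,a)\, S = \sum_{g \in G} \chi_a(h)\chi_a(g) = \sum_{g \in G} \chi_a(h+g) = \sum_{g' \in G} \chi_a(g') = S,
\]
so that $(\beta(h,a) - 1)\,S = 0$. Because $\beta(h,a) \neq 1$, the factor $\beta(h,a)-1$ is a nonzero element of the field $K$, whence $S = 0$. I emphasize that this conclusion is insensitive to the characteristic of $K$: the only cancellation performed is by the nonzero scalar $\beta(h,a)-1$, so no hypothesis on $p$ is needed at this stage.

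The crux of the argument, and the only place where the hypotheses on $\beta$ must be brought to bear, is therefore to guarantee that for each $a \neq 0$ such an $h$ exists, that is, that the character $\chi_a$ is nontrivial, or equivalently that $a$ does not lie in the radical $\{a \in G : \beta(g,a) = 1 \text{ for all } g \in G\}$. Here the assumption $\beta(g,g)=1$ plays a structural role: expanding $1 = \beta(g+h,\,g+h)$ by (\ref{eq.2.4}) and (\ref{eq.2.5}) yields $\beta(g,h)\beta(h,g)=1$, consistent with (\ref{eq.2.3}), so $\beta$ is skew and its left and right radicals coincide. The remaining point, that no nonzero $a$ lies in this radical, is exactly the nondegeneracy of $\beta$, which in the setting of interest is tied to $\dim Z(B)=1$ as in the computation of the centre $Z(B)$ above. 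Establishing that $\chi_a$ is nontrivial for every $a \neq 0$ is the main obstacle; once it is secured, the orthogonality computation of the second paragraph completes the proof at once.
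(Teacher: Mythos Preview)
Your argument is exactly the paper's: define $\chi_a(g)=\beta(g,a)$, pick $h$ with $\chi_a(h)\neq 1$, and use the translation trick to kill the sum. The paper's proof is three lines and simply asserts ``Let $k\in G$ be such that $\beta(k,a)\neq 1$'' without comment, then runs the same reindexing.

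You are right to single out the existence of such an $h$ as the crux, and right to be uneasy about it. The stated hypotheses (``nontrivial'' together with $\beta(g,g)=1$) do \emph{not} force the radical $G_0=\{a:\beta(g,a)=1\text{ for all }g\}$ to be trivial. For instance, on $G=\mathbb{Z}_2^3$ take a nontrivial skew bicharacter depending only on the first two coordinates; then $a=(0,0,1)\neq 0$ lies in $G_0$ and $\sum_g\beta(g,a)=|G|$, not $0$. So the lemma, as literally stated, is false, and the paper's proof glosses over the same point you flag. What is actually needed (and what is in force in the only application, Lemma~\ref{eq.equiv} as used via Proposition~\ref{prop.equiv.min.}) is nondegeneracy of $\beta$, i.e.\ $G_0=\{0\}$; under that hypothesis your argument and the paper's are complete and identical. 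Your instinct to tie the missing step to $\dim Z(B)=1$ is exactly the right diagnosis.
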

\begin{proof}
    Define $\chi\colon G \rightarrow K^{\ast}$ by $\chi(g) = \beta(g, a)$. Since $\beta$ is a bicharacter, $\chi$ is a group homomorphism. Let $k \in G$ be such that $\beta(k, a) \neq 1$. Then
    \[
	\sum_{g}\chi(g)=\sum_{g}\chi(kg)=\chi(k)\sum_{g}\chi(g),
    \]
therefore, since $\chi(k) \neq 1$, it follows that $\sum_{g} \chi(g)=0$.
\end{proof}
\begin{Lemma}
\label{eq.equiv}
Let $G$ be a finite abelian group and $\alpha \in H^{2}(G, K^{\ast})$ be a cocycle. Denote by $B := K^{\alpha}G$ the twisted group algebra of $G$ with respect to $\alpha$. Then, $\det M^{B}=0$ if and only if $p$ divides $|G|$.
\end{Lemma}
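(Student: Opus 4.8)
My plan is to derive the equivalence from a single matrix identity, namely $(M^{B})^{2}=|G|\cdot\mathrm{Id}$, and then simply read off when the determinant vanishes. The whole computation rests on the character-sum identity just proved in Lemma \ref{average.sum}, together with two structural facts about the induced bicharacter. Since $\beta$ is induced by $\alpha$ (Example \ref{twisted.group}), we have $\beta(g,g)=\alpha(g,g)\alpha(g,g)^{-1}=1$ for every $g$, so Lemma \ref{average.sum} is applicable; and $\beta(g,h)=\beta(h,g)^{-1}$ by (\ref{eq.2.3}). Thus $M^{B}$ is exactly the table $(\beta(g,h))_{g,h}$ of the homomorphisms $\chi_{g}:=\beta(g,\cdot)\in\mathrm{Hom}(G,K^{\ast})$.

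The core step is to compute the $(g,g')$-entry of $(M^{B})^{2}$, which is $\sum_{h\in G}\beta(g,h)\beta(h,g')$. Rewriting $\beta(g,h)=\beta(h,g)^{-1}$ via (\ref{eq.2.3}) and then using (\ref{eq.2.4}), this collapses to $\sum_{h\in G}\beta(h,g'-g)$. On the diagonal $g=g'$ each summand is $\beta(h,0)=1$, so the entry equals $|G|\cdot 1_{K}$; off the diagonal, with $a:=g'-g\neq 0$, Lemma \ref{average.sum} gives $\sum_{h}\beta(h,a)=0$. Hence $(M^{B})^{2}=|G|\cdot\mathrm{Id}$, so $(\det M^{B})^{2}=|G|^{\,|G|}$ in $K$, and therefore $\det M^{B}=0$ exactly when $|G|\cdot 1_{K}=0$, i.e. exactly when $p\mid|G|$, which is the assertion.

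The delicate point, which I expect to be the main obstacle, is the off-diagonal vanishing: Lemma \ref{average.sum} needs some $k$ with $\beta(k,a)\neq 1$, that is $a\notin\mathrm{Rad}(\beta):=\{x\in G:\beta(x,\cdot)\equiv 1\}$, so the clean identity $(M^{B})^{2}=|G|\cdot\mathrm{Id}$ is available precisely when $\beta$ is nondegenerate, which by Remark \ref{remark.equiv.min} is exactly the minimality of the decomposition of $B$. I would therefore organize the argument so that the two regimes dovetail with divisibility by $p$. If $\mathrm{Rad}(\beta)\neq 0$ then $M^{B}$ has two coinciding columns and $\det M^{B}=0$ directly (Definition \ref{def.min}); and this degeneracy is forced whenever $p\mid|G|$, since by Cauchy's theorem $G$ then has an element $a$ of order $p$, and $\beta(g,a)^{p}=\beta(g,pa)=\beta(g,0)=1$ forces $\beta(g,a)=1$ for all $g$ (the only $p$-th root of unity in characteristic $p$ being $1$), whence $a\in\mathrm{Rad}(\beta)$. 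So the hard part of the write-up is reconciling the two computations—the character-sum collapse of Lemma \ref{average.sum} in the nondegenerate case, giving $\det M^{B}=\pm|G|^{|G|/2}$, and the characteristic-$p$ root-of-unity argument in the case $p\mid|G|$—so that the singularity of $M^{B}$ and the condition $p\mid|G|$ match up across all cases.
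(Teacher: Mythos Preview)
Your core computation is the same as the paper's: both establish $(M^{B})^{2}=|G|\,I_{|G|}$ and read off the determinant. The only difference is cosmetic: you manipulate the bicharacter axioms (\ref{eq.2.3})--(\ref{eq.2.4}) directly to collapse $\sum_{h}\beta(g,h)\beta(h,g')$ to $\sum_{h}\beta(h,g'-g)$, whereas the paper reaches the same sum by computing conjugations $x_{g}x_{h}x_{g}^{-1}$ inside $K^{\alpha}G$. Your route is a little more elementary and never actually uses the twisted-algebra structure.

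You are right to flag nondegeneracy as the delicate point, and in fact you have located a gap that the paper's own proof glosses over: its appeal to Lemma~\ref{average.sum} tacitly assumes $g^{-1}h\notin G_{0}$ whenever $g\neq h$. However, your patch runs the implication the wrong way. You prove $p\mid|G|\Rightarrow\mathrm{Rad}(\beta)\neq 0$ via Cauchy, but to rescue the ``only if'' direction in the degenerate regime you would need the converse, $\mathrm{Rad}(\beta)\neq 0\Rightarrow p\mid|G|$, and that is false: take the trivial cocycle $\alpha\equiv 1$ on any nontrivial $G$ with $p\nmid|G|$; then $\beta\equiv 1$, $M^{B}$ is the all-ones matrix with determinant $0$, yet $p\nmid|G|$. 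So the biconditional as literally stated cannot be proved without an additional hypothesis. In the paper's actual uses of this lemma (Theorems~\ref{main.theorem.1}, \ref{main.theorem.2}, and the closing example) the algebra $B$ is always taken with minimal decomposition, i.e.\ $G_{0}=\{0\}$; under that standing assumption your first two paragraphs already constitute a complete proof, and your third paragraph can be dropped.
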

\begin{proof}
    Let $\beta$ be the bicharacter induced by $\alpha$. Given $g$, $h \in G$, it follows 
    \[
x_{g}x_{h}x_{g}^{-1}=\beta(g,h)x_{h}x_{g}x_{g}^{-1}=\beta(g,h)x_{h}x_{0}=\beta(g,h)x_{h}.
    \]
By Remark \ref{remark.inverse} we have $x_{g}^{-1}=\alpha(g,g^{-1})^{-1}x_{g^{-1}}$, and as a consequence
\begin{align*}
x_{h}x_{g}^{-1}x_{h}^{-1} &=x_{h}(\alpha(g,g^{-1})^{-1}x_{g^{-1}})x_{h}^{-1}=\beta(h,g^{-1})\alpha(g,g^{-1})^{-1}x_{g^{-1}}\\
&=\beta(h,g^{-1})x_{g}^{-1}.
\end{align*}
Denote by $m_{(g,h)}$ the $(g,h)$-entry of the matrix $(M^{B})^{2}$. Thus
\begin{align*}
&m_{(g,h)}x_{0}=(\sum_{k\in G}\beta(g,k)\beta(k,h))x_{0}=(\sum_{k\in G} (x_{g}x_{k}x_{g}^{-1}x_{k}^{-1})(x_{k}x_{h}x_{k}^{-1}x_{h}^{-1}))x_{0}\\
&=(\sum_{k\in G}x_{g}(x_{k}(x_{g}^{-1}x_{h})x_{k}^{-1})x_{h}^{-1})x_{0}=(\sum_{k\in G}x_{g}(x_{k}x_{g}^{-1}x_{k}^{-1})(x_{k}x_{h}x_{k}^{-1})x_{h}^{-1}) x_{0}.
\end{align*}
It follows that
$m_{(g,h)}x_{0}=(x_{g}(\sum_{k\in G}\beta(k,g^{-1})\beta(k,h))x_{g}^{-1}x_{h}x_{h}^{-1})x_{0}$.
Hence
\begin{equation}
\label{eq.case p}
m_{(g,h)}x_{0}=\Big(\sum_{k\in G}\beta(k,g^{-1}h)\Big)x_{0}.
\end{equation}
Lemma \ref{average.sum}, applied to Eq.~(\ref{eq.case p}), gives us that
\[
m_{(g,h)}=\Big(\sum_{k}\beta(k,g^{-1}h)\Big)=\begin{cases}
0\quad  \text{if}\quad g\neq h\\
|G|\quad \text{if}\quad g=h
\end{cases}.
\]
We  showed that $(M^{B})^{2}=|G|I_{|G|}$, where $I_{|G|}$ is the  $|G|\times |G|$ identity matrix. It follows that $\det M^{B}= 0$ if and only if $p$  divides $|G|$. 
\end{proof}

Now, let $G$ be a finite abelian group and $\alpha\in H^{2}(G,K^{\ast})$ be a cocycle. Consider the following subset of $G$ (in \cite{Duarte.Polcino} it was called the set  of regular elements of $G$):
\[
G_{0}:=\{s\in G\mid \alpha(s,x)=\alpha(x,s),\quad \text{for every}\quad x\in G\}.
\]
It follows by Remark \ref{remark.equiv.min} that if $\beta$ is the bicharacter induced by the cocycle $\alpha$, then
\[
G_{0}=\{s\in G\mid \beta(s,x)=1,\quad \text{for all}\quad x\in G\}.
\]
It is easy to see that $G_{0}$ is a subgroup of $G$. Indeed, given $s$, $s'\in G_{0}$, and $x\in G$, we have
\[
\beta(s-s',x)=\beta(s,x)\beta(-s',x)=\beta(s',x)^{-1}=1.
\]
This implies that $s-s'\in G_{0}$. In particular, since $G$ is abelian, $G_{0}$ is a normal subgroup of $G$. 

We summarize the above facts in the following proposition.

\begin{Proposition}
\label{prop.equiv.min.}
 Let $G$ be a finite abelian group and $\alpha\in H^{2}(G,K^{\ast})$ be a cocycle. Denote by $\beta$ the bicharacter induced by $\alpha$. If $B:=K^{\alpha}G$, the following are equivalent:
\begin{enumerate}
    \item The regular decomposition of $B$ is minimal.
    \item  $\dim Z(B)=1$.
    \item $G_{0}=\{0\}$.
\end{enumerate}
\end{Proposition}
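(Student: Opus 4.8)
The plan is to prove the three equivalences by establishing the cycle $(1)\Leftrightarrow(2)$ and $(2)\Leftrightarrow(3)$, both of which are essentially bookkeeping on facts already assembled in the excerpt. Recall that two of these implications are already available: Remark \ref{remark.equiv.min} records that the regular decomposition of $B$ is minimal if and only if $\dim Z(B)=1$, which is precisely $(1)\Leftrightarrow(2)$. Thus the only genuine work is to connect the dimension of the centre with the triviality of the subgroup $G_{0}$.

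First I would make the identification of $Z(B)$ explicit. We already showed that $Z(B)=\mathrm{span}_{K}(Y)$, where
\[
Y=\{x_{w}\mid \alpha(g,w)=\alpha(w,g)\ \text{for every}\ g\in G\}.
\]
Comparing this with the definition
\[
G_{0}=\{s\in G\mid \alpha(s,x)=\alpha(x,s)\ \text{for every}\ x\in G\},
\]
I would observe that $x_{w}\in Y$ if and only if $w\in G_{0}$, so that $Y=\{x_{w}\mid w\in G_{0}\}$. Since the $x_{w}$ are linearly independent basis elements of $B$, this gives the clean formula $\dim Z(B)=|G_{0}|$. From here the equivalence $(2)\Leftrightarrow(3)$ is immediate: $\dim Z(B)=1$ holds exactly when $|G_{0}|=1$, i.e.\ when $G_{0}=\{0\}$, using that $0\in G_{0}$ always (by the normalisation $\alpha(0,x)=\alpha(x,0)=1$) so $G_{0}$ is never empty and $|G_{0}|=1$ forces $G_{0}=\{0\}$.

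To close the argument I would assemble the pieces: the chain $\dim Z(B)=|G_{0}|$ together with Remark \ref{remark.equiv.min} yields
\[
\text{($B$ minimal)}\iff \dim Z(B)=1 \iff |G_{0}|=1 \iff G_{0}=\{0\},
\]
which is exactly the statement of the proposition. The computation $Z(B)=\mathrm{span}_{K}(Y)$ is precisely the content derived just before the proposition from condition (\ref{eq.5.2.1}), so I would simply cite it rather than redo the multiplication in $B$.

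I do not expect any serious obstacle here, since the proposition is a summary of facts each of which has already been verified in the preceding discussion; the only subtlety to state carefully is the passage from the \emph{set} $Y$ to the \emph{dimension} of its span, which is clean because $Y$ consists of distinct basis vectors, giving $\dim Z(B)=|Y|=|G_{0}|$. If anything, the main care needed is to phrase the equivalence $(1)\Leftrightarrow(2)$ as a direct invocation of Remark \ref{remark.equiv.min} rather than reproving it, keeping the proof short and pointing only to the one new identity $\dim Z(B)=|G_{0}|$ that ties the remark to the subgroup $G_{0}$.
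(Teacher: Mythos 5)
Your proposal is correct and takes essentially the same route as the paper, which offers no separate proof but presents the proposition as a summary of the preceding facts: the equivalence $(1)\Leftrightarrow(2)$ is Remark \ref{remark.equiv.min}, and $(2)\Leftrightarrow(3)$ follows from $Z(B)=\mathrm{span}_K(Y)$ together with the identification $Y=\{x_w\mid w\in G_0\}$, giving $\dim Z(B)=|G_0|$.
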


The next result shows us how to obtain a minimal decomposition of a twisted group algebra from a factor group of $G$. 

\begin{Proposition}
\label{construct.minimal}
    Let $\beta \colon G \times G \to K^{\ast}$ be a nontrivial bicharacter of $G$ satisfying $\beta(g,g)=1$, for every $g \in G$. 
 Then there exists a finite abelian group $\widetilde{H}$, which is  a quotient of $G$, and a regular $\widetilde{H}$-graded algebra $C$, whose bicharacter is uniquely determined by $\beta$, and the regular decomposition of $C$ is minimal.
\end{Proposition}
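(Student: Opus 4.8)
The plan is to quotient $G$ by the radical of $\beta$ and then apply Scheunert's Lemma on the quotient. Recall the subgroup
\[
G_{0}=\{s\in G\mid \beta(s,x)=1\ \text{for every}\ x\in G\},
\]
which was shown above to be a (normal) subgroup of $G$. I would set $\widetilde{H}:=G/G_{0}$ and write $\bar g:=g+G_{0}$ for the class of $g\in G$. The claim is that $\beta$ descends to a well-defined map $\widetilde{\beta}\colon \widetilde{H}\times \widetilde{H}\to K^{\ast}$ by $\widetilde{\beta}(\bar g,\bar h):=\beta(g,h)$.

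First I would check well-definedness. If $g'=g+s$ and $h'=h+s'$ with $s,s'\in G_{0}$, then by (\ref{eq.2.4}) and (\ref{eq.2.5})
\[
\beta(g',h')=\beta(g,h)\,\beta(g,s')\,\beta(s,h)\,\beta(s,s').
\]
Since $s\in G_{0}$ we have $\beta(s,h)=\beta(s,s')=1$, and since $s'\in G_{0}$ together with (\ref{eq.2.3}) we get $\beta(g,s')=\beta(s',g)^{-1}=1$; hence $\beta(g',h')=\beta(g,h)$, so $\widetilde{\beta}$ is well defined. That $\widetilde{\beta}$ is a bicharacter of $\widetilde{H}$ then follows at once from properties (\ref{eq.2.3})--(\ref{eq.2.5}) of $\beta$, and $\widetilde{\beta}(\bar g,\bar g)=\beta(g,g)=1$ for every $g\in G$.

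Next I would build the algebra. Since $\widetilde{H}$ is a finite abelian group and $\widetilde{\beta}$ is a bicharacter with $\widetilde{\beta}(\bar g,\bar g)=1$, Theorem \ref{scheunert} yields a cocycle $\widetilde{\alpha}\in H^{2}(\widetilde{H},K^{\ast})$ with $\widetilde{\beta}(\bar g,\bar h)=\widetilde{\alpha}(\bar g,\bar h)\,\widetilde{\alpha}(\bar h,\bar g)^{-1}$. Set $C:=K^{\widetilde{\alpha}}\widetilde{H}$. By Example \ref{twisted.group}, $C$ is a regular $\widetilde{H}$-graded algebra whose associated bicharacter is exactly $\widetilde{\beta}$; note that although $\widetilde{\alpha}$ need not be unique, the bicharacter $\widetilde{\beta}$ of $C$ is uniquely determined by $\beta$ through the quotient construction, as required by the statement.

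Finally, the heart of the argument is minimality, for which I would invoke Proposition \ref{prop.equiv.min.} applied to $\widetilde{H}$ and $\widetilde{\alpha}$: the regular decomposition of $C$ is minimal if and only if the radical
\[
\widetilde{H}_{0}=\{\bar s\in \widetilde{H}\mid \widetilde{\beta}(\bar s,\bar x)=1\ \text{for all}\ \bar x\in \widetilde{H}\}
\]
is trivial. So suppose $\bar s\in \widetilde{H}_{0}$. Since the quotient map is surjective, $\beta(s,x)=\widetilde{\beta}(\bar s,\bar x)=1$ for every $x\in G$, which means precisely $s\in G_{0}$, i.e. $\bar s=0$ in $\widetilde{H}$. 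Thus $\widetilde{H}_{0}=\{0\}$ and the regular decomposition of $C$ is minimal. (The hypothesis that $\beta$ is nontrivial guarantees $G\neq G_{0}$, so $\widetilde{H}\neq\{0\}$, though this is not needed for minimality itself.) I expect the only delicate point to be the well-definedness and bicharacter verification for $\widetilde{\beta}$; once the construction is pushed to the quotient by $G_{0}$, the triviality of the new radical is essentially forced by the very definition of $G_{0}$.
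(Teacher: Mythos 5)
Your proposal is correct and follows essentially the same route as the paper: quotient by the radical $G_{0}$ of $\beta$, check that $\beta$ descends to a well-defined bicharacter on $\widetilde{H}=G/G_{0}$, apply Scheunert's Lemma on the quotient to build the twisted group algebra $C$, and conclude minimality from Proposition \ref{prop.equiv.min.} since the induced bicharacter has trivial radical by construction. The only (harmless) difference is that the paper first invokes Scheunert on $G$ itself to realize $\beta$ as induced by a cocycle before forming the quotient, whereas you define $G_{0}$ directly from $\beta$ and skip that preliminary step.
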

\begin{proof}
    By Scheunert's theorem, there exists a cocycle $\alpha \in H^{2}(G, K^{\ast})$ such that the regular algebra $A:= K^{\alpha}G$ has $\beta$ as its bicharacter. Put $\widetilde{H}:=G/G_{0}$, and define
    \[
    \overline{\beta}\colon \widetilde{H}\times \widetilde{H}\rightarrow K^{\ast},\quad (a+G_{0},b+G_{0})\mapsto \beta(a,b).
    \]
    We have to show that $\overline{\beta}\colon \widetilde{H}\times \widetilde{H}\rightarrow K^{\ast}$ is a bicharacter of $\widetilde{H}$. 

    First, suppose that $(a+G_{0},b+G_{0})=(a'+G_{0},b'+G_{0})$, in other words, there exist $r$, $s\in G_{0}$, such that
$a=a'+r$ and $b=b'+s$. This implies that 
    \[
    \overline{\beta}((a+G_{0},b+G_{0}))=\beta(a,b)=\beta(a'+r,b'+s)=\beta(a',b')\beta(a',s) \beta(r,b')\beta(r,s).
    \] 
    As $r$, $s\in G_{0}$, it follows $\beta(a,r) =\beta(a',s)= \beta(r,b')= \beta(r,s)=1$, and  we get
    \[
    \overline{\beta}((a+G_{0},b+G_{0}))=\beta(a',b')=\overline{\beta}((a'+G_{0},b'+G_{0})).
    \]
    This shows that $\overline{\beta}$ is well defined. On the other hand,  by the definition of $\overline{\beta}$ and the fact that $\beta$ is a bicharacter, it follows that for $a$, $a'$, $b$, $b'\in G$,
    \begin{align*}
    \overline{\beta}(a+a'+G_0,b+G_0)&=\overline{\beta}(a+G_{0},b+G_{0})\overline{\beta}(a'+G_{0},b+G_{0}),\\
    \overline{\beta}(a+G_0,b+b'+G_0)& =\overline{\beta}(a+G_{0},b+G_{0})\overline{\beta}(a+G_{0},b'+G_{0}).
    \end{align*}
Also, given $a$, $b\in G$ one has
    \[
    \overline{\beta}(a+G_{0},b+G_{0})=\beta(a,b)=\beta(b,a)^{-1}=\overline{\beta}(a+G_{0},b+G_{0})^{-1}.
    \]
Therefore, $\overline{\beta}$ is a bicharacter of $\widetilde{H}$. By Theorem \ref{scheunert}, there exists a cocycle $\widetilde{\alpha}\in H^{2}(\widetilde{H}, K^{\ast})$ such that the regular algebra $C:=K^{\widetilde{\alpha}}\widetilde{H}$ has $\overline{\beta}$ as its associated bicharacter. Proposition \ref{prop.equiv.min.} implies that the regular decomposition of $C$ is minimal. Indeed, if $s+G_{0}\in H$ satisfies
    $\overline{\beta}(s+G_{0},g+G_{0})=1$ for every $g\in G$, then
$\beta(s,g)=1$, for every $g\in G$.
This implies that $s\in G_{0}$, therefore $\widetilde{H}_{0}=\{0_{\widetilde{H}}\}$.    
\end{proof}

\begin{Lemma}
\label{construct.cocycle}
Let $G$, be a  finite non-cyclic abelian group. Then, there exists $\alpha\in H^{2}(G,K^{\ast})$ such that $G_{0}\neq G$. Moreover, if the number of cyclic factors in the decomposition of $G$ is greater than 2, it is possible to find such an $\alpha\in H^{2}(G,K^{\ast})$ such that $G_{0}$ is a proper subgroup of $G$.
\end{Lemma}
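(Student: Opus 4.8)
The plan is to invoke Scheunert's Lemma (Theorem \ref{scheunert}) to trade the construction of the cocycle $\alpha$ for the construction of a suitable bicharacter. Recall from the discussion preceding the statement (and Remark \ref{remark.equiv.min}) that the set $G_{0}$ attached to $\alpha$ is exactly the radical $\{s\in G\mid \beta(s,x)=1 \text{ for all }x\in G\}$ of the induced bicharacter $\beta$, and that every bicharacter satisfying $\beta(g,g)=1$ for all $g$ is induced by some $\alpha\in H^{2}(G,K^{\ast})$. Thus it suffices to exhibit an alternating bicharacter $\beta$ whose radical is $\neq G$ for the first assertion, and is a proper nontrivial subgroup of $G$ for the ``moreover'' part. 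Note that $G_{0}\neq G$ says precisely that $\beta$ is nontrivial.

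First I would fix a cyclic decomposition $G=\mathbb{Z}_{d_{1}}\times\cdots\times\mathbb{Z}_{d_{k}}$, with $k\geq 2$ since $G$ is non-cyclic, and single out two factors, indexed by $i$ and $j$. Put $d=\gcd(d_{i},d_{j})>1$ and choose a nontrivial root of unity $\omega\in K^{\ast}$ whose order $d'$ divides $d$. Writing $x=(x_{1},\dots,x_{k})$, I would define the alternating pairing
\[
\beta(x,y)=\omega^{\,x_{i}y_{j}-x_{j}y_{i}}.
\]
Since $d'\mid d\mid d_{i}$ and $d'\mid d_{j}$, the exponent is well defined modulo the orders of the two factors, so $\beta$ is independent of representatives; it is bimultiplicative by inspection, it is alternating because $\beta(x,x)=\omega^{x_{i}x_{j}-x_{j}x_{i}}=1$, and $\beta(y,x)=\beta(x,y)^{-1}$. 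Hence $\beta$ is an alternating bicharacter, and it is nontrivial because $\beta(e_{i},e_{j})=\omega\neq 1$, where $e_{i},e_{j}$ are the standard generators of the chosen factors.

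Next I would compute the radical. From $\beta(s,x)=\omega^{\,s_{i}x_{j}-s_{j}x_{i}}$, testing against $x=e_{j}$ and $x=e_{i}$ forces $s_{i}\equiv 0$ and $s_{j}\equiv 0\pmod{d'}$, and conversely these congruences make $\beta(s,\cdot)$ trivial, so
\[
G_{0}=\{s\in G\mid s_{i}\equiv 0\equiv s_{j}\pmod{d'}\}.
\]
As $d'>1$, the generator $e_{i}$ is not in $G_{0}$, whence $G_{0}\neq G$; this proves the first assertion. For the ``moreover'' part, assume the number of cyclic factors is at least $3$. Keeping the form supported on the pair $(i,j)$, every element supported on the remaining factors lies in $G_{0}$, and there is at least one such factor, so $G_{0}\supsetneq\{0\}$, while $G_{0}\neq G$ exactly as before. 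Thus $G_{0}$ is a proper nontrivial subgroup, and Theorem \ref{scheunert} produces the required $\alpha\in H^{2}(G,K^{\ast})$ inducing $\beta$.

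The step I expect to be the main obstacle is guaranteeing the existence of the nontrivial root of unity $\omega$ of order dividing $\gcd(d_{i},d_{j})$ for a suitable pair of factors; this is the only place where the hypotheses on $K$ enter. Since $K$ has characteristic $p>2$, the element $-1$ is always a nontrivial root of unity, so whenever two of the cyclic factors have even order one may simply take $\omega=-1$; otherwise one must select a pair whose gcd has a prime divisor $q\neq p$ for which $K$ contains a primitive $q$-th root of unity, which is secured by the running assumptions on $K$. Checking that such a pair is always available for the groups relevant to the counterexample, together with verifying that the radical computation is preserved under the passage through Scheunert's Lemma, is the only delicate bookkeeping.
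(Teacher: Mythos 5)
Your reduction via Scheunert's Lemma is sound as far as it goes: $G_{0}$ depends only on the induced bicharacter, so it does suffice to produce an alternating bicharacter with the prescribed radical, and your pairing $\beta(x,y)=\omega^{\,x_{i}y_{j}-x_{j}y_{i}}$ achieves this \emph{whenever} the required $\omega$ exists. The genuine gap is exactly the step you flagged and then waved away: the existence of a nontrivial root of unity $\omega\in K^{\ast}$ of order dividing $\gcd(d_{i},d_{j})$ is \emph{not} secured by the running hypotheses. The group this lemma is actually applied to in Theorems \ref{main.theorem.1} and \ref{main.theorem.2} is $G=\mathbb{Z}_{p}\times\mathbb{Z}_{p}\times\mathbb{Z}_{p}$ over a field of characteristic $p$; there every pairwise gcd equals $p$, and since $x^{p}-1=(x-1)^{p}$ the only $p$-th root of unity in $K$ is $1$. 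So no admissible $\omega$ exists, your $\beta$ is forced to be trivial, and the construction collapses precisely in the case that matters.

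Worse, this is not a defect of your particular construction but of the statement itself: if $g\in G$ has order $n$, then any bicharacter satisfies $\beta(g,h)^{n}=\beta(ng,h)=1$, so on a $p$-group in characteristic $p$ every bicharacter --- in particular every one induced by a cocycle --- is identically $1$, whence $G_{0}=G$ for \emph{every} $\alpha\in H^{2}(G,K^{\ast})$. Hence the lemma fails for $G=\mathbb{Z}_{p}^{3}$, and no argument can close your gap for that group. (The paper's own proof runs into the same wall: its antisymmetric factor $\gamma(x,y)=(-1)^{i_{2}k_{1}}$ is not a $2$-cocycle on $\mathbb{Z}_{p}^{3}$ for odd $p$ --- test the cocycle identity on $\sigma=a_{2}$, $\tau=(p-1)a_{2}$, $\nu=a_{1}$, which gives $-1$ on one side and $1$ on the other; equivalently, the putative induced bicharacter $(-1)^{i_{2}k_{1}-i_{1}k_{2}}$ is not well defined modulo an odd $p$.) Your approach does prove the lemma for those $G$ possessing two cyclic factors whose gcd is divisible by a prime $q$ for which $K$ contains a nontrivial $q$-th root of unity (for instance two factors of even order, taking $\omega=-1$), but that class excludes the groups needed for the paper's counterexample, so the issue you identified as "delicate bookkeeping" is in fact fatal.
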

\begin{proof}
    Without loss of generality, we assume $G= \langle a_{1}\rangle\times  \langle a_{2}\rangle\times \cdots \times  \langle a_{r}\rangle$.
    Since $K$ is infinite,  one can choose distinct $\lambda_{1}$, $\lambda_{2}$, \dots, $\lambda_{r}\in K^{\ast}\setminus\{1\}$.

Denote by $n_{i}$ the order of $\langle a_{i}\rangle$, $1\leq i\leq r$. Now define 
    \[
    t_{\lambda_{1},\ldots,\lambda_{r}}(\sum_{j}i_{j}a_{j},\sum_{j}k_{j}a_{j})=\prod_{s=1}^{r}t_{\lambda_{s}}(i_{s}a_{s},k_{s}a_{s}),
    \]
where for  each $1\leq s\leq r$
    \[
    t_{\lambda_{s}}(i_{s}a_{s},k_{s}a_{s})=\begin{cases}   
    1\quad\text{if}\quad i_{s}+k_{s}<n_{s}\\
    \lambda_{s}\quad\text{if}\quad i_{s}+k_{s}\geq n_{s}
   \end{cases}.
    \]  
    As shown in \cite[p.~5]{Duarte.Polcino}, $t_{\lambda_{1},\ldots,\lambda_{r}}$ is an element of $H^{2}(G,K^{\ast})$. Moreover, by definition
    \begin{equation}
    \label{eq. sym.}
    t_{\lambda_{1},\ldots,\lambda_{r}}(\sum_{j}i_{j}a_{j},\sum_{j}k_{j}a_{j})=t_{\lambda_{1},\ldots,\lambda_{r}}(\sum_{j}k_{j}a_{j},\sum_{j}i_{j}a_{j}).
    \end{equation}
    Next, consider 
    \[
    \alpha(\sum_{j}i_{j}a_{j},\sum_{j}k_{j}a_{j} )= (-1)^{i_{2}k_{1}}(t_{\lambda_{1},\ldots,\lambda_{r}}(\sum_{j}i_{j}a_{j},\sum_{j}k_{j}a_{j})).
    \]
We claim $\alpha\in H^{2}(G,K^{\ast})$. Indeed, since $t_{\lambda_{1},\lambda_{2},\ldots,\lambda_{r}}\in H^{2}(G,K^{\ast})$ it is  sufficient to show that
    \[
    \gamma\colon (\sum_{j}i_{j}a_{j},\sum_{j}k_{j}a_{j})\mapsto (-1)^{i_{2}k_{1}}\in H^{2}(G,K^{\ast}).
    \]
    In order to do this, we observe that
    \[
    \gamma(\sum_{j}i_{j}a_{j}+\sum_{j}q_{j}a_{j},\sum_{j}k_{j}a_{j} )\gamma(\sum_{j}i_{j}a_{j},\sum_{j}q_{j}a_{j})=(-1)^{(i_{2}+q_{2})k_{1}}(-1)^{i_{2}q_{1}},
    \]
    \[
\gamma(\sum_{j}i_{j}a_{j},\sum_{j}q_{j}a_{j}+\sum_{j}k_{j}a_{j} )\gamma(\sum_{j}q_{j}a_{j},\sum_{j}k_{j}a_{j})=(-1)^{i_{2}(q_{1}+k_{1})}(-1)^{q_{2}k_{1}}.
    \]
On the other hand, since
\begin{align*}
(-1)^{(i_{2}+q_{2})k_{1}}(-1)^{i_{2}q_{1}}& =(-1)^{i_{2}k_{1}+q_{2}k_{1}+i_{2}q_{1}}=(-1)^{i_{2}q_{1}+i_{2}k_{1}+q_{2}k_{1}}\\
&=(-1)^{i_{2}(q_{1}+k_{1})}(-1)^{q_{2}k_{1}},
\end{align*}
we have that $\gamma$ satisfies the cocycle condition and hence $\gamma\in H^{2}(G,K^{\ast})$. 
Consequently, $\alpha\in H^{2}(G,K^{\ast})$.  

Finally, in order to see that $G_{0}\neq G$, it is sufficient to exhibit two elements $x$, $y\in G$ with $\alpha(x,y)\neq\alpha(y,x)$. Consider $x=a_{1}+a_{3}$ and $y=a_{1}+a_{2}+a_{3}$. Observe that
\begin{align*}
\alpha(x,y)& =(-1)^{0\cdot 1}t_{\lambda_{1}\ldots, \lambda_{r}}(x,y)=t_{\lambda_{1}\ldots, \lambda_{r}}(x,y), \\
\alpha(y,x)&=(-1)^{1}t_{\lambda_{1}\ldots, \lambda_{r}}(y,x)=-t_{\lambda_{1}\ldots, \lambda_{r}}(x,y).
\end{align*}
By definition, we have $t_{\lambda_{1},\ldots, \lambda_{r}}(x,y)=t_{\lambda_{1},\ldots, \lambda_{r}}(y,x)$, then, because the characteristic of the base field is $p>2$, we have $\alpha(x,y)\neq \alpha(y,x)$.
This shows that $G_{0}\neq G$. 

Now, suppose $r>2$. If $x'=a_{3}$, it follows from (\ref{eq. sym.}) that for every $z\in G$
\[
\alpha(x',z)=(-1)^{0}t_{\lambda_{1},\ldots, \lambda_{r}}(x',z)= (-1)^{0}t_{\lambda_{1},\ldots, \lambda_{r}}(z,x')=\alpha(z,x')
\]
and so $x'\in G_{0}$, which shows that $G_{0}\neq \{0\}$.
\end{proof}

\begin{Remark} According to \cite[Corollary 2.4 ]{Duarte.Polcino}, the result of the last lemma does not hold for cyclic groups, i.e if $G$ is cyclic and $\alpha\in H^{2}(G,K^{\ast})$, then $G_{0}=G$.
\end{Remark}

\begin{Theorem}
\label{main.theorem.1}
Let $K$ be an infinite field of characteristc $p>2$. Then, there exists a finite abelian group $G$ and a regular $G$-graded algebra $A$ such that the regular decomposition of $A$ is minimal, but $\det M^{A}=0$
\end{Theorem}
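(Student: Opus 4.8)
The plan is to produce the counterexample from the relatively free regular algebra of Theorem~\ref{uni.reg.} attached to a carefully chosen \emph{non-alternating} bicharacter, rather than from a twisted group algebra. The reason for insisting on $\beta(g,g)\neq 1$ is structural: if $B=K^{\alpha}G$ is a twisted group algebra then Lemma~\ref{eq.equiv} gives $(M^{B})^{2}=|G|I$, so $\det M^{B}=\pm|G|^{|G|/2}$ vanishes only when $p\mid|G|$; but whenever $g\in G$ has order $p$ we get $\beta(g,h)^{p}=\beta(pg,h)=\beta(0,h)=1$, forcing $\beta(g,h)=1$ in characteristic $p$, so $g\in G_{0}$ and, by Proposition~\ref{prop.equiv.min.}, the decomposition is non-minimal. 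Thus minimality of a twisted group algebra forces $p\nmid|G|$ and $\det M^{B}\neq0$; the vanishing of the determinant must instead be produced by root-of-unity relations inside the matrix, which is possible only for a bicharacter that is not alternating.

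Following the algebraically closed case, I would fix an odd integer $t>1$ coprime to $p$ such that $K$ contains a primitive $t$-th root of unity $\zeta$, put $n=2t$, and take the bicharacter
\[
\beta((i,j),(k,l))=(-1)^{ik+jl}\,\zeta^{jk-il}
\]
on $\mathbb{Z}_{n}\times\mathbb{Z}_{n}$, for which $e=f=-1$ and $\beta((0,1),(1,0))=\zeta$ in the notation of Proposition~\ref{lem.class}. By Theorem~\ref{uni.reg.} there is a regular $(\mathbb{Z}_{n}\times\mathbb{Z}_{n})$-graded algebra $A$ whose associated bicharacter is $\beta$, and this relatively free construction is valid over our $K$ because all values of $\beta$ already lie in $K$.

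It then remains to verify minimality and the vanishing of the determinant. For minimality I would compute the radical: imposing $\beta((i,j),(k,l))=1$ at $(k,l)=(1,0)$ and $(0,1)$ yields $\zeta^{j}=(-1)^{i}$ and $\zeta^{i}=(-1)^{j}$; since $\zeta$ has odd order, $-1\notin\langle\zeta\rangle$, so both equalities force $i,j$ even and $t\mid i$, $t\mid j$, whence $2t\mid i$, $2t\mid j$ and $(i,j)=(0,0)$. Hence $G_{0}=\{0\}$ and the decomposition is minimal by Proposition~\ref{prop.equiv.min.}. For the determinant, Theorem~\ref{det.decomposi.equal} in the case $e=f=-1$ (which applies since $n=2t$ is even) gives $\det M^{A}=\det D(\zeta,\zeta^{-1})$, and Theorem~\ref{teo.pauli.grading}(a) evaluates the latter as $\pm\zeta^{(n-1)^{2}(n-n^{2})/2}\prod_{i=1}^{n-1}(1-\zeta^{i})$. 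As $\zeta^{t}=1$ with $1\le t\le n-1$, the factor $1-\zeta^{t}$ is zero, so $\det M^{A}=0$.

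The heart of the matter, and the step I expect to be the main obstacle, is the very first one: producing, over an arbitrary infinite field of characteristic $p$, an odd $t>1$ coprime to $p$ with a primitive $t$-th root of unity in $K$. The roots of unity of $K$ form a subgroup $\mu(K)\le K^{\ast}$ of order prime to $p$, and any odd $t>1$ dividing $|\mu(K)|$ makes the construction go through verbatim; this is automatic when $\mu(K)$ is infinite or has nontrivial odd part. The genuinely delicate situation is when $\mu(K)$ is small in spite of $K$ being infinite (e.g.\ when $\mu(K)=\{\pm1\}$): there every bicharacter is $\{\pm1\}$-valued, a minimal decomposition forces the underlying group to be an elementary abelian $2$-group, and the decomposition matrix becomes the character table of that group, whose determinant is a power of $2$ and hence nonzero in characteristic $p>2$. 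Securing the required root of unity --- or circumventing this case by a separate field-theoretic device --- is therefore exactly where the real content of the infinite-field statement resides.
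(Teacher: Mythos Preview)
Your route and the paper's diverge sharply. The paper argues via twisted group algebras: it starts from $Q=\mathbb{Z}_{p}^{3}$, invokes Lemma~\ref{construct.cocycle} to produce a cocycle with $\{0\}\subsetneq Q_{0}\subsetneq Q$, passes to $G=Q/Q_{0}$ by Proposition~\ref{construct.minimal}, and then reads off $\det M^{A}=0$ from $p\mid|G|$ via Lemma~\ref{eq.equiv}. Your opening paragraph is a correct refutation of exactly this strategy: an element $g\in G$ of order $p$ satisfies $\beta(g,h)^{p}=\beta(0,h)=1$, so $\beta(g,h)=1$ in characteristic~$p$ and $g\in G_{0}$, whence minimality and $p\mid|G|$ are incompatible. (Concretely, the factor $(-1)^{i_{2}k_{1}}$ in the proof of Lemma~\ref{construct.cocycle} is not a cocycle on $\mathbb{Z}_{p}^{3}$ when $p$ is odd, and indeed every bicharacter on a $p$-group with values in a field of characteristic $p$ is trivial.)

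The genuine gap in your proposal, though, is not the availability of the root of unity but something more basic: the very argument you deploy against twisted group algebras applies to \emph{every} $G$-graded regular algebra. The radical computation $\beta(g,h)^{p}=1\Rightarrow\beta(g,h)=1$ uses only the bicharacter axioms (\ref{eq.2.3})--(\ref{eq.2.5}), not the alternating condition $\beta(g,g)=1$, so your non-alternating example is equally constrained: minimality still forces $p\nmid|G|$. And for any bicharacter with trivial radical one computes directly
\[
(M^{A})^{2}_{g,h}=\sum_{k\in G}\beta(g,k)\beta(k,h)=\sum_{k\in G}\beta(g-h,k)=\begin{cases}|G|&\text{if }g=h,\\0&\text{if }g\neq h,\end{cases}
\]
so $(M^{A})^{2}=|G|\,I$ and $\det M^{A}\neq0$. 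Your conclusion $\det M^{A}=0$ rests on Theorem~\ref{det.decomposi.equal}, and it is that theorem that breaks: its proof sums only over $S_{n}\times S_{n}$ rather than over $S_{n^{2}}$, which is legitimate in the Kronecker-product case $e=f=1$ but not otherwise. A concrete witness is $n=2$, $e=f=-1$, $\xi=1$: here $M^{A}$ is the $4\times4$ Hadamard matrix with $\det M^{A}=\pm16$, while $D(1,1)$ has rank one. The upshot is that neither your construction nor the paper's yields the claimed counterexample, and the identity $(M^{A})^{2}=|G|\,I$ together with your own $p$-torsion argument indicates that for abelian $G$ the Bahturin--Regev conjecture actually persists in positive characteristic.
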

\begin{proof}
    Let $Q=\mathbb{Z}_{p}\times \mathbb{Z}_{p}\times \mathbb{Z}_{p}$, it is an elementary abelian group of order $p^{3}$. By Lemma \ref{construct.cocycle}, there exists $\alpha\in H^{2}(Q,K^{\ast})$ such that $Q_{0}\notin \{\{0\},Q\}$.
    
   If $\beta$ is the bicharacter induced by the cocycle $\alpha$, it satisfies
$\beta(g,g)=1$, for every $g\in G$.

By the construction given in Proposition \ref{construct.minimal}, if $G:=Q/Q_{0}$, there exists $\delta\in H^{2}(G,K^{\ast})$ such that  $A=K^{\delta}G$ is a $G$-graded regular algebra with minimal regular decomposition. 
    
    However, since $Q_{0}\ne 0$, and $Q_0\ne Q$, we conclude that $p$ divides $|G|$, and it follows from Lemma \ref{eq.equiv} that $\det M^{A}=0$.
\end{proof}

\subsection{The counterexample for finite fields}

Here we  deal with the case when $K$ is a finite field of characteristic $p>2$. We recall that Lemma \ref{construct.cocycle} was crucial in the proof of Theorem \ref{main.theorem.1}. 

\begin{Lemma}
\label{lem.construc.finite}
Let $K$ be a finite field, then there exist an abelian group $(T,+)$ and a cocycle $\alpha\in H^{2}(T,K^{\ast})$ such that $T_{0}\notin \{\{0\},T\}$.  
\end{Lemma}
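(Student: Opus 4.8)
The plan is to follow the template of Lemma~\ref{construct.cocycle}, but to keep only the ingredient that actually governs $T_0$ and that makes sense over a finite field. In the infinite-field construction the cocycle was a product $\alpha=\gamma\cdot t_{\lambda_1,\ldots,\lambda_r}$, in which $\gamma(x,y)=(-1)^{i_2k_1}$ is an antisymmetric $\{\pm1\}$-valued factor and $t_{\lambda_1,\ldots,\lambda_r}$ is a \emph{symmetric} cocycle depending on distinct scalars $\lambda_1,\ldots,\lambda_r\in K^{\ast}\setminus\{1\}$. It is precisely the demand that these scalars be distinct which forced $K$ to be infinite. The observation I would exploit is that the symmetric factor plays no role in determining $T_0$: by definition $T_0$ depends only on the induced bicharacter $\beta(x,y)=\alpha(x,y)\alpha(y,x)^{-1}$, and a symmetric cocycle contributes $1$ to this quotient, so $\beta$ is unchanged if $t_{\lambda_1,\ldots,\lambda_r}$ is dropped. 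Over a finite field I would therefore discard the symmetric part and retain only $\gamma$, whose values lie in $\{\pm1\}\subseteq K^{\ast}$ and so are available in any field of characteristic $p>2$.

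The one technical point to respect is that $\gamma(x,y)=(-1)^{i_2k_1}$ only descends to a genuine function on $T\times T$ when the first two cyclic factors of $T$ have even order. I would accordingly take
\[
T=\mathbb{Z}_2\times\mathbb{Z}_2\times\mathbb{Z}_2,
\]
with standard generators $a_1,a_2,a_3$, and set $\alpha\bigl((i_1,i_2,i_3),(k_1,k_2,k_3)\bigr)=(-1)^{i_2k_1}$. Since every coordinate has order $2$ this expression is unambiguous, and the verification of the cocycle identity is word for word the one already carried out for $\gamma$ inside the proof of Lemma~\ref{construct.cocycle}; hence $\alpha\in H^2(T,K^{\ast})$. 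Here the third factor $\mathbb{Z}_2$ plays the role that the extra generator $a_3$ played in the ``moreover'' part of that lemma.

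It remains to read off $T_0$ from the induced bicharacter $\beta(x,y)=\alpha(x,y)\alpha(y,x)^{-1}=(-1)^{i_2k_1+i_1k_2}$. On the one hand $a_3=(0,0,1)$ has $i_1=i_2=0$, so $\beta(a_3,y)=1$ for every $y\in T$; thus $a_3\in T_0$ and $T_0\neq\{0\}$. On the other hand $\beta(a_1,a_2)=(-1)^{1}=-1\neq1$, using $p>2$, so $a_1\notin T_0$ and $T_0\neq T$. Together these give $T_0\notin\{\{0\},T\}$, as required. The only genuine obstacle relative to the infinite-field case is the well-definedness issue flagged above: one cannot transplant the parametrized symmetric cocycle, so the construction must be anchored on a group of even exponent, and the hypothesis $p>2$ enters solely to guarantee $-1\neq1$ in $K$.
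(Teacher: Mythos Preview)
Your proof is correct and, for the lemma as literally stated, cleaner than the paper's. You take $T=\mathbb{Z}_2\times\mathbb{Z}_2\times\mathbb{Z}_2$ with the bare cocycle $\alpha(x,y)=(-1)^{i_2k_1}$, whereas the paper takes $T=\mathbb{Z}_p\times\mathbb{Z}_p\times\mathbb{Z}_p$ and must split into the cases $p=3$ and $p>3$ precisely because it keeps a symmetric factor $t_{\lambda_1,\ldots,\lambda_r}$ built from distinct scalars in $K^\ast\setminus\{1\}$. Your observation that the symmetric factor contributes nothing to the induced bicharacter, hence nothing to $T_0$, is exactly right, and your care about the parity of the first two cyclic factors is a genuine point: for odd $n$ the reduction modulo $n$ in $i_2+q_2$ breaks the cocycle identity for $(-1)^{i_2k_1}$, so anchoring on $\mathbb{Z}_2$ is the clean fix.

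The cost is downstream. Lemma~\ref{lem.construc.finite} exists only to feed Theorem~\ref{main.theorem.2}, whose proof begins ``Consider $T=\mathbb{Z}_p\times\mathbb{Z}_p\times\mathbb{Z}_p$'' and then passes to $G=T/T_0$ and applies Lemma~\ref{eq.equiv}; the conclusion $\det M^A=0$ there requires $p\mid|G|$. The paper's choice $T=\mathbb{Z}_p^3$ guarantees this. Your $T$ has $|T|=8$ and $T_0=\{0\}\times\{0\}\times\mathbb{Z}_2$, so $|T/T_0|=4$, coprime to $p$, and Lemma~\ref{eq.equiv} would then yield $\det M^A\neq 0$ rather than the desired counterexample. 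In short, you have proved the lemma as written, but not the implicit stronger statement (``one may take $T=\mathbb{Z}_p^3$'') that the paper actually uses; the paper's extra work is buying $p\mid|T|$, not merely the existence of some $T$.
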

\begin{proof}
    We divide the proof into two cases. If $p=3$, consider 
    \[
    T =\langle a_{1}\rangle\times \langle a_{2}\rangle\times \langle a_{3}\rangle\cong \mathbb{Z}_{3}\times \mathbb{Z}_{3}\times \mathbb{Z}_{3}, 
    \]
 and set
    \[
    \alpha(\sum_{j=1}^3 i_{j}a_{j},\sum_{j=1}^3 k_{j}a_{j})=(-1)^{i_{2}k_{1}}t_{-1}(i_{3}a_{3},k_{3}a_{3}).
    \]
    By the same computation as in Lemma \ref{construct.cocycle}, it follows that $\alpha\in H^{2}(T,K^{\ast})$. Now observe that if $x=a_2+a_{3}$ and $y=a_{1}+a_{3}$, then 
    \[
    \alpha(x,y)=-t_{-1}(a_3,a_3), \quad \alpha(y,x)=t_{-1}(a_{3},a_{3}).
    \]
Since $p=3$, it follows that $\alpha(x,y)\neq \alpha(y,x)$. Therefore, $T_{0}\neq T$. In order to see that $T_{0} \neq \{0\}$, define $x' = a_{3}$, and observe that for every $z=k_{1}a_{1}+k_{2}a_{2}+k_{3}a_{3} \in T$, we have
\[
\alpha(x',z)=(-1)^{0}t_{-1}(a_{3},k_{3}a_{3})=(-1)^{0}t_{-1}(k_{3}a_{3},a_{3})=\alpha(z,x').
\]
Therefore $x'\in T_{0}$. 

If $p>3$, then the proof of Lemma \ref{construct.cocycle} works for the following abelian group $T$: 
\[
T=\langle a_{1}\rangle \times \langle a_{2}\rangle \times \langle a_{3}\rangle\cong \mathbb{Z}_{p} \times \mathbb{Z}_{p} \times \mathbb{Z}_{p}.
\]
 This is seen by considering, in Lemma \ref{construct.cocycle}, $\lambda_{1}=2$, $\lambda_{2}=3$, and $\lambda_{3}=4$.
\end{proof}

In a similar way to Theorem \ref{main.theorem.1}, we have the following
\begin{Theorem}
\label{main.theorem.2}
Let $K$ be a finite field of characteristic $p>2$. Then there exists a finite abelian group $G$ and a $G$-graded regular algebra $A$ such that the regular decomposition of $A$ is minimal, but $\det M^{A}=0$.
\end{Theorem}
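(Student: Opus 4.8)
The plan is to reduce Theorem~\ref{main.theorem.2} to the same argument scheme already used in Theorem~\ref{main.theorem.1}, substituting Lemma~\ref{lem.construc.finite} for Lemma~\ref{construct.cocycle} as the source of the intermediate cocycle. The only reason Theorem~\ref{main.theorem.1} assumed $K$ infinite was that Lemma~\ref{construct.cocycle} chose distinct scalars $\lambda_1,\dots,\lambda_r \in K^{\ast}\setminus\{1\}$, which requires enough elements in the field; over a finite field this choice may be unavailable, so Lemma~\ref{lem.construc.finite} supplies an explicit group $T$ together with a cocycle $\alpha$ for which $T_0\notin\{\{0\},T\}$ without needing many distinct scalars. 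Granting that lemma, the rest of the machinery is field-independent.

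Concretely, first I would invoke Lemma~\ref{lem.construc.finite} to obtain a finite abelian group $Q:=T$ and a cocycle $\alpha\in H^{2}(Q,K^{\ast})$ with $Q_0\notin\{\{0\},Q\}$. Let $\beta$ be the bicharacter induced by $\alpha$; by the construction in Example~\ref{twisted.group} and the defining property of $G_0$, one has $\beta(g,g)=1$ for every $g\in Q$. Next I would apply Proposition~\ref{construct.minimal} to the quotient $G:=Q/Q_0$: this yields a cocycle $\delta\in H^{2}(G,K^{\ast})$ so that $A:=K^{\delta}G$ is a $G$-graded regular algebra whose regular decomposition is \emph{minimal}, since Proposition~\ref{construct.minimal} guarantees $G_0=\{0_G\}$ and Proposition~\ref{prop.equiv.min.} then forces minimality. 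Finally, because $Q_0$ is a \emph{proper, nontrivial} subgroup of $Q=\mathbb{Z}_p\times\mathbb{Z}_p\times\mathbb{Z}_p$, the quotient $G$ has order $p^3/|Q_0|$, still divisible by $p$; Lemma~\ref{eq.equiv} then gives $\det M^{A}=0$. This completes the proof.

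The one genuinely new ingredient is Lemma~\ref{lem.construc.finite}, and its proof already handles the real obstacle: over a finite field one cannot freely pick many distinct scalars, so the case $p=3$ is treated separately using a single $\lambda=-1$ via the symmetric $2$-cocycle $t_{-1}$ on the third factor together with the antisymmetric sign factor $(-1)^{i_2k_1}$ on the first two factors, while for $p>3$ the elements $2,3,4$ are distinct and nonzero in $K$, so the argument of Lemma~\ref{construct.cocycle} transfers verbatim. I would therefore not expect any serious difficulty in the theorem itself; the proof is essentially a reassembly of Proposition~\ref{construct.minimal}, Proposition~\ref{prop.equiv.min.}, and Lemma~\ref{eq.equiv}, exactly as in Theorem~\ref{main.theorem.1}, with the finite-field lemma plugged in. The point most worth stating carefully is the divisibility claim: one must note that $|Q_0|$ is a power of $p$ strictly smaller than $p^3$ and strictly greater than $1$, so that $p\mid |G|$ indeed holds and Lemma~\ref{eq.equiv} applies.
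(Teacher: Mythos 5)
Your proposal is correct and follows exactly the paper's route: the paper's proof of Theorem~\ref{main.theorem.2} likewise takes $T=\mathbb{Z}_{p}\times\mathbb{Z}_{p}\times\mathbb{Z}_{p}$, invokes Lemma~\ref{lem.construc.finite} to get $\alpha$ with $T_{0}\notin\{\{0\},T\}$, and then repeats the argument of Theorem~\ref{main.theorem.1} via Proposition~\ref{construct.minimal} and Lemma~\ref{eq.equiv}. Your explicit remark that $|T_{0}|$ is a proper nontrivial power of $p$, so $p$ still divides $|T/T_{0}|$, is a worthwhile detail the paper leaves implicit.
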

\begin{proof} Consider $T=\mathbb{Z}_{p}\times \mathbb{Z}_{p}\times \mathbb{Z}_{p}$. By Lemma \ref{lem.construc.finite}, there exists $\alpha\in H^{2}(T,K^{\ast})$ such that $T_{0}\notin \{\{0\},T\}$. Now the proof follows in the same way as in Theorem \ref{main.theorem.1}.
\end{proof}

\subsection{A construction of a regular algebra that verifies the conjecture}

Here we construct a regular algebra that  verifies the Bahturin and Regev's conjecture for an arbitrary infinite field $K$ of characteristic $p>2$. In order to do this, it is sufficient to consider a finite abelian group that is the product of two or more cyclic groups and proceed in accordance with Proposition \ref{construct.minimal} and Lemma \ref{eq.equiv}. In the following example we use appropriate choices of roots of unity.

 Let $q$ and $q_1$ be distinct prime numbers, each different from $p$, such that $(p,q-1)=(p,q_{1}-1)=1$. Let $\zeta$ and $\zeta_1$ be primitive $q$-th and $q_1$-th roots of unity, respectively, in $\overline{K}$.

It is well-known that $K(\zeta)/K$ and $K(\zeta_1)/K$ are Galois abelian field extensions, i.e., $\text{Gal}(K(\zeta)/K)$ and $\text{Gal}(K(\zeta_1)/K)$ are abelian groups. In particular,
    \begin{equation}
    \label{eq.5.4.1}
    |\text{Gal}(K(\zeta)/K)|=[K(\zeta): K]= \phi(q)=q-1,
    \end{equation}
    \begin{equation}
    \label{eq.5.4.2}
    |\text{Gal}(K(\zeta_{1})/K)|=[K(\zeta_{1}): K]= \phi(q_{1})=q_{1}-1,
    \end{equation}
    where $\phi$  is the Euler's totient function. It follows from \cite[Corollary 1.15, Chapter VI]{lang}, that the Galois group of the composite field $E:=K(\zeta)K(\zeta_{1})$ is isomorphic to the following
    \[
    \text{Gal}(E/K)\cong \text{Gal}(K(\zeta)/K)\times \text{Gal}(K(\zeta_{1})/K).
    \]
Therefore by  Lemma \ref{construct.cocycle}, there exists $\alpha \in H^{2}(\text{Gal}(E/K),K^{\ast})$ such that 
    \[
    (\text{Gal}(E/K))_{0}\notin \{\{0\}\text{Gal}(E/K)\}.
    \]
    In particular, this implies that there exists $\alpha\in H^{2}(\text{Gal}(E/K),K^{\ast})$ such that the twisted group algebra $B:=K^{\alpha}\text{Gal}(E/K)$ is noncommutative. If $\widetilde{G}:=\text{Gal}(E/K)$, it follows from Proposition \ref{construct.minimal} that there exists $\widetilde{\alpha} \in H^{2}(\widetilde{G}/\widetilde{G}_{0}, K^{\ast})$, such that the twisted group algebra $C:=K^{\widetilde{\alpha}}(\widetilde{G}/\widetilde{G}_{0})$ is a $\widetilde{G}/\widetilde{G}_{0}$-graded regular algebra with minimal regular decomposition.

On the other hand, since $|\widetilde{G}|=(q-1)(q_{1}-1)$, it follows that 
\[
p\nmid \Big|\widetilde{G}/\widetilde{G}_{0}\Big|
\]
and, by Lemma \ref{eq.equiv}, we have $\det M^{C} \neq 0$. Thus, the $\widetilde{G}/\widetilde{G}_{0}$-graded regular algebra $C$ verifies the conjecture. 

This last example also shows us an interesting fact about the theory of twisted group algebras and crossed product algebras. Recall that if $\Omega/K$ is a Galois extension of $K$, and $\alpha \in H^{2}(\text{Gal}(\Omega/K), \Omega^{\ast})$, then we define the crossed product of $\Omega$ and $\text{Gal}(\Omega/K)$ as the $K$-algebra $(\Omega, \text{Gal}(\Omega/K), \alpha)$ with basis $\{x_{\sigma} \mid \sigma \in \text{Gal}(\Omega/K)\}$, whose product is defined on the basic elements by
\[
x_{\sigma}x_{\tau}:=x_{\sigma\tau}\alpha(\sigma,\tau),\quad \sigma,\tau \in \text{Gal}(\Omega/K),
\]
and the action of $\Omega$ on $(\Omega,\text{Gal}(\Omega/K),\alpha)$ satisfies
\[
t x_{\sigma}:=x_{\sigma}\sigma(t),\quad t\in \Omega.
\]
Let us return to our example. It is well known that $(E,\widetilde{G},\alpha)$ is a central simple algebra over $K$ \cite[Theorem 4.4.1]{herstein}, but this is not the case with $K^{\alpha}\widetilde{G}$. Otherwise, by Lemma \ref{aux.const.examp.}, $K^{\alpha}\widetilde{G}$ would be minimal, that is, $\widetilde{G}_{0}=\{0\}$, but this contradicts the choice we made for $\alpha$. As a consequence, $K^{\alpha}\widetilde{G}$ is not a simple algebra.

\bibliographystyle{abbrv}
\bibliography{Ob.reg}

\begin{thebibliography}{10}

\bibitem{Eli1}
E.~Aljadeff and O.~David.
\newblock On regular ${G}$-gradings.
\newblock {\em Transactions of the American Mathematical Society}, 367(6):4207--4233, 2015.

\bibitem{ssamn}
S.~S. Azevedo.
\newblock Graded identities for the matrix algebra of order $n$ over an infinite field.
\newblock {\em Communications in Algebra}, 30(12):5849--5860, 2002.

\bibitem{bahturin2009graded}
Y.~Bahturin and A.~Regev.
\newblock Graded tensor products.
\newblock {\em Journal of Pure and Applied Algebra}, 213(9):1643--1650, 2009.

\bibitem{bcommutation}
Y.~Bahturin, A.~Regev, and D.~Zeilberger.
\newblock Commutation relations and {V}andermonde determinants.
\newblock {\em European Journal of Combinatorics}, 30(5):1271--1276, 2009.

\bibitem{dvn}
O.~M. Di~Vincenzo and V.~Nardozza.
\newblock On a {R}egev-{S}eeman conjecture about $\mathbb{{Z}}_2$-graded tensor products.
\newblock {\em Israel Journal of Mathematics}, 176:43--59, 2010.

\bibitem{drenskym2}
V.~S. Drensky.
\newblock A minimal basis of identities for a second-order matrix algebra over a field of characteristic 0. (english. russian original).
\newblock {\em Algebra Logic, translated from Algebra Logika 20, 282-290 (1981)}, 20:188--194, 1981.

\bibitem{Duarte.Polcino}
A.~Duarte, R.~A. Ferraz, and C.~P. Milies.
\newblock Twisted group algebras of {A}belian groups.
\newblock {\em Finite Fields and Their Applications}, 95:102386, 2024.

\bibitem{eldkochetov}
A.~Elduque and M.~Kochetov.
\newblock {\em Gradings on simple Lie algebras}, volume 189.
\newblock Mathematical Surveys and Monographs. American Mathematical Society, and Atlantic Association for Research in the Mathematical Sciences., 2013.

\bibitem{jfpk}
J.~A. Freitas and P.~Koshlukov.
\newblock Polynomial identities for graded tensor products of algebras.
\newblock {\em Journal of Algebra}, 321(2):667--681, 2009.

\bibitem{herstein}
I.~N. Herstein.
\newblock {\em Noncommutative rings}, volume~15.
\newblock American Mathematical Soc., 1994.

\bibitem{kemer}
A.~R. Kemer.
\newblock {\em Ideals of identities of associative algebras. Transl. from the Russian by C. W. Kohls. Transl. ed. by Ben Silver.}, volume~87.
\newblock Translations of Mathematical Monographs. American Mathematical Soc., 1991.

\bibitem{pkm2}
P.~Koshlukov.
\newblock Basis of the identities of the matrix algebra of order two over a field of characteristic $p\ne 2$.
\newblock {\em Journal of Algebra}, 241(1):410--434, 2001.

\bibitem{lang}
S.~Lang.
\newblock {\em Algebra}, volume 211.
\newblock Springer Science \& Business Media, 2012.

\bibitem{razmm2}
Y.~P. Razmyslov.
\newblock Finite basing of the identities of a matrix algebra of second order over a field of characteristic zero. (english. russian original).
\newblock {\em Algebra Logic, translated from Algebra Logika 12, 83-113 (1973)}, 12:47--63, 1973.

\bibitem{regevz2}
A.~Regev and T.~Seeman.
\newblock $\mathbb{Z}_2$-graded tensor products of {P}{I} algebras.
\newblock {\em Journal of Algebra}, 291(1):274--296, 2005.

\bibitem{scheunert}
M.~Scheunert.
\newblock Generalized {L}ie algebras.
\newblock In {\em Group Theoretical Methods in Physics: Seventh International Colloquium and Integrative Conference on Group Theory and Mathematical Physics, Held in Austin, Texas, September 11--16, 1978}, pages 450--450. Springer, 1979.

\bibitem{vasmn}
S.~Y. Vasilovsky.
\newblock $\mathbb{Z}_n$-graded polynomial identities of the full matrix algebra of order $n$.
\newblock {\em Proceedings of the American Mathematical Society}, 127(12):3517--3524, 1999.

\bibitem{wall}
C.~T.~C. Wall.
\newblock Graded brauer groups.
\newblock {\em Journal für die Reine und Angewandte Mathematik}, 213:187--199, 1964.

\end{thebibliography}

\end{document}